\newcommand{\C}{\mathbb{C}}
\newcommand{\G}{\mathbb{G}}
\newcommand{\N}{\mathbb{N}}
\newcommand{\Q}{\mathbb{Q}}
\newcommand{\R}{\mathbb{R}}
\newcommand{\Z}{\mathbb{Z}}
\newcommand{\eps}{\varepsilon}
\newcommand{\Aut}{\mathop{\mathrm{Aut}}}
\newcommand{\Br}{\mathop{\mathrm{Br}}}
\newcommand{\der}{\mathrm{der}}
\newcommand{\End}{\mathop{\mathrm{End}}}
\newcommand{\Gal}{\mathop{\mathrm{Gal}}}
\newcommand{\GL}{\mathop{\mathrm{GL}}\nolimits}
\newcommand{\Hom}{\mathop{\mathrm{Hom}}}
\renewcommand{\Im}{\mathop{\mathrm{Im}}}
\newcommand{\Res}{\mathop{\mathrm{Res}}\nolimits}
\newcommand{\abs}[1]{\lvert #1 \rvert}
\newcommand{\op}{\mathrm{op}}
\newcommand{\id}{\mathrm{id}}
\newcommand{\innerprod}[2]{\langle #1, #2 \rangle}
\newcommand{\isom}{\cong}
\newcommand{\fullsmallmatrix}[4]{\bigl( \begin{smallmatrix} #1 & #2 \\ #3 & #4 \end{smallmatrix} \bigr)}
\newcommand{\Galabs}[1]{\Gal(\bar{#1}/#1)}
\newcommand{\zeroone}{\left\{ 0, 1 \right\}}
\newcommand{\half}{\tfrac{1}{2}}
\newcommand{\defterm}[1]{\textit{#1}}
\newtheorem{lemma}{Lemma}[section]
\newtheorem{proposition}[lemma]{Proposition}
\newtheorem{theorem}[lemma]{Theorem}
\newtheorem{corollary}[lemma]{Corollary}
\newtheorem*{proposition*}{Proposition}
\newtheorem*{corollary*}{Corollary}
\theoremstyle{definition}
\newtheorem*{definition}{Definition}
\newcommand{\lcm}{\operatorname{LCM}}
\newcommand{\rk}{\operatorname{rk}}
\newcommand{\nfsqr}{(n + \lfloor \sqrt{2n} \rfloor)}
\author{Martin Orr}
\address{Université Paris-Sud\\ Bat.\ 425\\ 91400 Orsay\\ France}
\email{martin.orr@math.u-psud.fr}
\title{Lower bounds for ranks of Mumford--Tate~groups}
\begin{document}

\frontmatter

\begin{abstract}
Let $A$ be a complex abelian variety and $G$ its Mumford--Tate group.
Supposing that the simple abelian subvarieties of $A$ are pairwise non-isogenous,
we find a lower bound for the rank $\rk G$ of $G$, which is a little less than $\log_2 \dim A$.
If we suppose that $\End A$ is commutative, then we show that $\rk G \geq \log_2 \dim A + 2$,
and this latter bound is sharp.
We also obtain the same results for the rank of the $\ell$-adic monodromy group of an abelian variety defined over a number field.
\end{abstract}

\begin{altabstract}
Soit $A$ une variété abélienne complexe et $G$ son groupe de Mumford--Tate.
En supposant que les sous variétés abéliennes simples de $A$ sont deux à deux non-isogènes,
on trouve une minoration du rang $\rk G$ de $G$, légèrement inférieure à $\log_2 \dim A$.
Si on suppose que $\End A$ est commutatif, alors on montre que $\rk G \geq \log_2 \dim A + 2$,
et cette borne-ci est optimale.
On obtient les mêmes resultats pour le rang du groupe de monodromie $\ell$-adique d'une variété abélienne définie sur un corps de nombres.
\end{altabstract}


\maketitle

\mainmatter

\section{Introduction}

Let $A$ be a complex abelian variety of dimension $g$, whose simple abelian subvarieties are pairwise non-isogenous.
In this paper we will establish a lower bound for the rank of the Mumford--Tate group of $A$.
The Mumford--Tate group is an algebraic group over $\Q$ defined via the Hodge theory of $A$
(see section~\ref{sec:mt-group-definition} below for the definition).
The same argument will also establish a lower bound for the rank of the $\ell$-adic monodromy groups $G_\ell$,
in the case where $A$ is defined over a number field.
The $\ell$-adic monodromy group is the Zariski closure of the image of the Galois representation on the $\ell$-adic Tate module of $A$.
Our main theorems are the following:

\begin{theorem} \label{thm:main-bound-commutative}
Let $A$ be an abelian variety of dimension $g$ such that $\End A$ is commutative.
Let $G$ be the Mumford--Tate group or the $\ell$-adic monodromy group of $A$.
Then $\rk G \geq \log_2 g + 2$.
\end{theorem}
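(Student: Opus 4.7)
The plan is to decompose $A$ up to isogeny into simple factors, establish the bound on the rank of the Mumford--Tate group of each factor, and combine. Commutativity of $\End A$ forces, via Poincaré reducibility, that $A$ is isogenous to $\prod_{i=1}^{n} A_i$ with the $A_i$ simple and pairwise non-isogenous, and each $E_i := \End^0 A_i$ a field; by Albert's classification each $E_i$ is either totally real (Type~I) or a CM field (Type~IV). Writing $g_i = \dim A_i$, the Mumford--Tate group $G$ embeds in $\prod G_i$ with surjective projections onto each $G_i$, and the equality of $\End^0 A$ with the commutant of $G$ in $\End H_1(A, \Q)$ identifies the centre as $Z(G) = G \cap \prod E_i^\times$.

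I would then split into two cases. If every $A_i$ is of CM type, so that $A$ itself is CM and $G$ is a torus, I would apply a Ribet--Dodson style lower bound for the rank of the Mumford--Tate torus of a CM abelian variety of dimension $g$, namely $\rk G \geq \log_2(2g) + 1 = \log_2 g + 2$. Otherwise some $A_i$ is non-CM, in which case $G_i^{\der}$ is non-trivial and a direct analysis of $G_i$ as a subgroup of the $E_i$-linear symplectic similitude group of $H_1(A_i, \Q)$ gives $\rk G_i \geq g_i/[E_i:\Q] + [E_i:\Q]$, which exceeds $\log_2 g_i + 2$ for all $g_i$ (with the few small cases verified by direct inspection). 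Combining $\rk G \geq \rk G_i$ with the fact that the $n - 1$ other simple factors contribute $n - 1$ independent central directions to $Z(G)^\circ$ (via the pairwise non-isogeny of the $A_j$ and the identification $Z(G) = G \cap \prod E_i^\times$), together with $\max_i g_i \geq g/n$ and the elementary inequality $n - 1 \geq \log_2 n$, closes the bound.

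I expect the main obstacle to be the CM lower bound of the previous paragraph. This is essentially an arithmetic statement about the sublattice of $\Z[\Hom(E, \C)]$ generated by the CM type $\Phi$, and must be sharp enough to accommodate the ``degenerate'' CM types --- of which Weil's examples in dimensions $2^k$ achieve the extremal case $\rk G = \log_2(2g) + 1$ and show the bound of the theorem is tight. The $\ell$-adic monodromy version should follow by running the same argument for $G_\ell$, since Faltings' theorems supply the required commutant and surjectivity statements on the Tate module, and the rest of the proof is purely a bookkeeping argument about ranks of algebraic groups.
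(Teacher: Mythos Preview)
Your proposal contains a genuine gap in the non-CM case. The claimed inequality $\rk G_i \geq g_i/[E_i:\Q] + [E_i:\Q]$ for a simple non-CM factor $A_i$ is false. The paper itself constructs counterexamples: for suitable $n$ there exist simple abelian varieties of dimension $g = 2^{n-1}$ with $\End A = \Z$ and Mumford--Tate group of rank exactly $n+1 = \log_2 g + 2$ (Example~2 via the spin representation, Example~3 generalising Mumford's construction). With $E_i = \Q$ your bound would read $\rk G_i \geq g_i + 1$, forcing the Mumford--Tate group to be the full $\mathrm{GSp}_{2g_i}$ whenever $\End A_i = \Z$; these examples refute that already for $g_i = 4$. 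Since the theorem is \emph{sharp} precisely on such non-CM varieties, any argument for the simple non-CM case must already contain the full content of the theorem, so the reduction to simple factors buys nothing. The combination step is also not obviously justified: it is not clear that the centre of $\operatorname{MT}(\prod_i A_i)$ picks up an independent rank-one direction from each additional factor.

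The paper's argument is uniform and avoids the simple/CM dichotomy altogether. It works directly with the representation $\rho$ of $G$ on $H^1(A,\Q)$ (or on the Tate module): since every cocharacter $\mu$ in the defining set $\Psi$ pairs with every weight of $\rho_{|T}$ to a value in $\{0,1\}$, a basis of the cocharacter space drawn from $\Psi$ together with the homothety cocharacter shows there are at most $2^{\rk G - 1}$ distinct weights. Serre's result that the irreducible constituents are minuscule implies each weight appears with multiplicity equal to that of its constituent, and commutativity of $\End\rho$ forces all multiplicities to be $1$. Hence $2g = \dim\rho \leq 2^{\rk G - 1}$. This is Ribet's CM argument applied to a maximal torus of an arbitrary reductive $G$, handling the CM and non-CM cases on the same footing.
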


\begin{theorem} \label{thm:main-bound-noncommutative}
Let $A$ be an abelian variety of dimension $g$ whose simple abelian subvarieties are pairwise non-isogenous.
Let $G$ be the Mumford--Tate group or the $\ell$-adic monodromy group of $A$.
If $n = \rk G$, then
\[ n + \alpha(n)\sqrt{n \log_e n} \geq \log_2 g + 2 \]
for a function $\alpha : \N_{\geq 2} \to \R$ satisfying $\alpha(n) < 2$ for all $n$ and $\alpha(n) \to 1/\log_e 2 = 1.44...$ as $n \to \infty$.
\end{theorem}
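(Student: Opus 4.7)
The plan is to decompose $A$ up to isogeny into its simple factors and bound the dimension of each factor in terms of the rank of the associated Mumford--Tate group via Albert's classification of endomorphism algebras; the error term $\alpha(n)\sqrt{n \log_e n}$ will arise from the quaternionic Albert types together with a lattice-point counting argument.

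I would begin by writing $A \sim A_1 \times \cdots \times A_r$ with the $A_j$ simple and, by hypothesis, pairwise non-isogenous. Set $d_j = \dim A_j$ and $V_j = H_1(A_j, \Q)$, so that $g = \sum_j d_j$. The non-isogeny assumption yields $\End A \otimes \Q = \prod_j D_j$ with $D_j := \End A_j \otimes \Q$ a division algebra of one of Albert's four types. The Mumford--Tate group $G$ preserves the decomposition $H_1(A, \Q) = \bigoplus_j V_j$; its image $G_j$ in $\GL(V_j)$ is the Mumford--Tate group of $A_j$, and the surjection $G \twoheadrightarrow G_j$ gives $n_j := \rk G_j \leq n$. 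The same setup works verbatim for the $\ell$-adic monodromy group, replacing rational homology with the Tate module.

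The next step is to establish, for each simple factor $A_j$, a bound of the shape $\log_2 d_j + 2 \leq n_j + \delta_j$ where $\delta_j$ depends only on the Albert type of $D_j$. In the commutative cases (types I and IV) one expects $\delta_j = 0$, corresponding to Theorem~\ref{thm:main-bound-commutative} applied to $A_j$. In the quaternionic cases (types II and III) the action of the quaternion algebra on $V_j$ doubles the representation and hence permits $d_j$ to be larger for the same $n_j$; this forces a positive $\delta_j$. The analysis is case-by-case and relies on identifying the Mumford--Tate group of $A_j$ as a subgroup of a classical group determined by $D_j$ together with the polarization, and on comparing its rank with the dimension of $V_j$.

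Finally, one assembles these local bounds into a bound on $\log_2 g$. Writing $g = \sum_j d_j$ and noting that the Hodge cocharacter $\mu \colon \G_m \to G_\C$ has projections $\mu_j$ to each $G_{j,\C}$ which are, by pairwise non-isogeny, non-conjugate elements of the cocharacter lattice of a fixed maximal torus $T$ of $G$, one obtains a counting problem in the rank-$n$ lattice of cocharacters of $T$ subject to the Hodge-theoretic constraint $\mu_j + \bar{\mu}_j = w$ (the weight cocharacter). Estimating the number of admissible cocharacters by a central binomial coefficient of size $\binom{n}{\lfloor n/2 \rfloor}$ and applying Stirling's formula then produces the claimed bound with $\alpha(n) \to 1/\log_e 2$. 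The main obstacle lies here: combining the quaternionic corrections $\delta_j$ with the lattice-counting estimate so that their aggregate takes precisely the form $\alpha(n)\sqrt{n \log_e n}$ with the correct asymptotic constant, and verifying that $\alpha(n) < 2$ uniformly.
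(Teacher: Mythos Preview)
Your plan misidentifies where the correction term $\alpha(n)\sqrt{n\log_e n}$ comes from, and this leads to an approach that will not close. In the paper the argument has two completely separate pieces: (a) a character count showing that a maximal torus $T$ of $G$ supports at most $2^{n-1}$ distinct characters in $\rho_{|T}$ (this uses only that the weights lie in $\{0,1\}$, so each character is determined by its values on a basis of cocharacters, giving a hypercube bound); and (b) a bound on the \emph{multiplicity} $M$ of each absolutely irreducible constituent of $\rho$, yielding $\dim\rho \le M\cdot 2^{n-1}$. The correction term is entirely $\log_2 M$. The multiplicity is controlled by an invariant $u(G)$, the exponent of $\Lambda/\Lambda_0$, via Tits' theory of $F$-irreducible representations and the Brauer group; and $u(G)$ is bounded by a variant $g_1(n)$ of Landau's function (maximum LCM of a partition). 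Landau's theorem $\log g(n)\sim\sqrt{n\log n}$ is what produces the $\sqrt{n\log n}$ shape and the constant $1/\log 2$; Massias' explicit bound gives $\alpha(n)<2$.

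Your mechanism is different at both stages and does not produce the right answer. Counting cocharacters by a central binomial coefficient $\binom{n}{\lfloor n/2\rfloor}$ and applying Stirling gives a correction of order $-\tfrac12\log_2 n$, not $+\sqrt{n\log n}$; and in any case the character bound in (a) is exactly $2^{n-1}$, with no Stirling loss. The Albert classification is not the source of the correction either: types II and III only ever contribute a factor of $2$ to the multiplicity (since there $m\le 2$), which is absorbed into a constant; the genuinely large multiplicities come from type IV, where the endomorphism algebra is a central division algebra over a CM field whose Brauer order can be as large as $u(G)$. Finally, your counting step rests on the claim that the Hodge cocharacters $\mu_j$ of non-isogenous simple factors are distinct in the cocharacter lattice of $T$, but non-isogenous abelian varieties can have conjugate (indeed identical) Hodge cocharacters, so this step fails as stated. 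You need the minuscule/multiplicity framework and Landau's function; the Albert case split you propose will not reach the target inequality.
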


Each of these theorems is an instance of a more general bound for weak Mumford--Tate triples, which are defined in section~\ref{sec:mt-group-definition}.
These more general bounds are Theorems~\ref{thm:mt-triple-bound-commutative} and~\ref{thm:mt-triple-bound-noncommutative} respectively.
These would apply also for example to the analogue of the Mumford--Tate group for a Hodge--Tate module of weights~0 and~1.

Theorem~\ref{thm:main-bound-commutative} was proved by Ribet in the case of an abelian variety with complex multiplication~\cite{ribet:cm}.
Our proof is a generalisation of his, relying on the fact that the defining representation of the Mumford--Tate group or $\ell$-adic monodromy group has minuscule weights.

\vskip 1em

The condition on simple subvarieties in Theorem~\ref{thm:main-bound-noncommutative} is necessary:
taking products of copies of the same simple abelian variety increases the dimension without changing the rank of the Mumford--Tate group.
Indeed, if $A$ is isogenous to $\prod_i A_i^{m_i}$ where the $A_i$ are simple and pairwise non-isogenous, then according to \cite{hindry-ratazzi:product-mt} Lemme~2.2,
\[ \operatorname{MT}(A) \cong \operatorname{MT}(\prod_i A_i). \]
Hence Theorem~\ref{thm:main-bound-noncommutative} implies that for a general abelian variety $A$, if $n$ denotes the rank of either the Mumford--Tate group or the $\ell$-adic monodromy group of~$A$, then
\[ n + \alpha(n)\sqrt{n \log_e n} \geq \log_2 \left( \sum_i \dim A_i \right) + 2 \]
where the $A_i$ are one representative of each isogeny class of simple abelian subvarieties of $A$.

The condition of having pairwise non-isogenous simple abelian subvarieties can be interpreted via the endomorphism algebra like the condition in Theorem~\ref{thm:main-bound-commutative}:
it is equivalent to $\End A \otimes_\Z \Q$ being a product of division algebras.
Note also that $\End A$ being commutative implies the condition of Theorem~\ref{thm:main-bound-noncommutative}.
(Throughout this paper, $\End A$ means the endomorphisms of $A$ after extension of scalars to an algebraically closed field.)

\vskip 1em

Let $G$ be either the Mumford--Tate group or the $\ell$-adic monodromy group of $A$.
It is well known that the rank of $G$ is at most $g + 1$,
and that this upper bound is achieved for a generic abelian variety.
Indeed, if $g$ is odd and $\End A = \Z$, then $\rk G$ is always $g+1$~\cite{serre:rank-odd-dim}.
So in this case the bound in Theorem~\ref{thm:main-bound-commutative} is far from sharp.

On the other hand if $g$ is a power of $2$,
then there are abelian varieties for which the bound in Theorem~\ref{thm:main-bound-commutative} is achieved
(even with $\End A = \mathbb{Z}$).
We construct such examples in section~\ref{sec:examples}.
The exact bound for a given $g$ is very sensitive to the prime factors of $g$.
Equality can happen only when $g$ is a power of $2$ (for the trivial reason that otherwise $\log_2 g \not\in \Z$) but even near-equality can only occur when $g$ has many small prime factors.
This was made precise by Dodson in the complex multiplication case \cite{dodson:cm},
and it is possible that something similar could be proved in general.

Theorem~\ref{thm:main-bound-noncommutative} is not sharp.
The function $\alpha(n)$ is specified exactly in section~\ref{sec:multiplicity},
but it is likely that this could be improved on, perhaps to something which goes to $0$ as $n \to \infty$.
In section~\ref{sec:examples}, we construct a family of examples showing that Theorem~\ref{thm:main-bound-noncommutative} cannot be improved to $n + k \geq \log_2 g$ for any constant $k$.

\vskip 1em

We can deduce a lower bound for the growth of the degrees of the division fields $K(A[\ell^n])$ (for $\ell$ a fixed prime number) as a straightforward consequence of Theorem~\ref{thm:main-bound-commutative}.

\begin{corollary}
Let $A$ be an abelian variety of dimension $g$ over a number field~$K$, and $\ell$ a prime number.
If $\End A$ is commutative,
then there is a constant $C(A, K, \ell)$ such that
\[ [ K(A[\ell^n]) : K ] \geq C(A, K, \ell) \, \ell^{n (\log_2 g + 2)}. \]
\end{corollary}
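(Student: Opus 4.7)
The plan is to translate the lower bound on $\rk G_\ell$ given by Theorem~\ref{thm:main-bound-commutative} into a lower bound on the order of the mod-$\ell^n$ Galois image, which is exactly $\Gal(K(A[\ell^n])/K)$. First I would set up the $\ell$-adic picture: let $\rho_\ell \colon \Gal(\bar K/K) \to \GL(T_\ell A) \isom \GL_{2g}(\Z_\ell)$ be the $\ell$-adic Galois representation, and let $H$ be its image. By definition, $G_\ell$ is the Zariski closure of $H$ in $\GL_{2g, \Q_\ell}$, and a theorem of Bogomolov guarantees that $H$ is an open subgroup of $G_\ell(\Q_\ell)$. In particular, $H$ is a compact $\ell$-adic Lie group whose dimension equals $d := \dim G_\ell$.

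Next, I would estimate $[K(A[\ell^n]) : K]$ using the structure theory of $\ell$-adic Lie groups. Writing $\Gamma_n$ for the kernel of $\GL_{2g}(\Z_\ell) \to \GL_{2g}(\Z/\ell^n\Z)$, we have
\[ [K(A[\ell^n]) : K] = [H : H \cap \Gamma_n]. \]
For $n_0$ large enough, $H \cap \Gamma_{n_0}$ is a uniform pro-$\ell$ group of dimension $d$; by the basic theory of such groups, for $n \geq n_0$ one has $[H : H \cap \Gamma_n] = [H : H \cap \Gamma_{n_0}] \cdot \ell^{(n-n_0)d}$. Absorbing the factor $\ell^{-n_0 d} [H : H \cap \Gamma_{n_0}]$ and the finitely many small values of $n$ into a single constant $C(A,K,\ell) > 0$ then yields $[K(A[\ell^n]) : K] \geq C(A,K,\ell) \cdot \ell^{nd}$ for every $n \geq 1$.

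Finally, I would invoke Theorem~\ref{thm:main-bound-commutative} applied to the $\ell$-adic monodromy group: since $G_\ell$ is reductive (by the semisimplicity of $V_\ell A$ as a Galois representation, due to Faltings), its dimension is at least the dimension of any maximal torus, so $d \geq \rk G_\ell \geq \log_2 g + 2$. Combined with the previous estimate this gives the claimed bound. The only external inputs beyond Theorem~\ref{thm:main-bound-commutative} are the openness of $H$ in $G_\ell(\Q_\ell)$ (Bogomolov) and the $\ell^{nd}$ growth of indices of congruence subgroups inside a uniform pro-$\ell$ group of dimension $d$; both are standard, so I do not anticipate a genuine obstacle, the argument being essentially a bookkeeping exercise once the rank bound is in hand.
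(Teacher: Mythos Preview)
Your argument is correct and is exactly the standard deduction the paper has in mind; the paper does not write out a proof of this corollary but simply calls it a ``straightforward consequence'' of Theorem~\ref{thm:main-bound-commutative}, having already recorded Bogomolov's openness result in Section~\ref{sec:mt-group-definition}. One cosmetic remark: the inequality $\dim G_\ell \geq \rk G_\ell$ holds for any algebraic group (a maximal torus is a subgroup), so you need not invoke reductivity there.
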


Theorem~\ref{thm:main-bound-noncommutative} implies a similar bound for the degree of $K(A[\ell^n])$ whenever $A$ is an abelian variety whose simple abelian subvarieties are pairwise non-isogenous.
One would like to extend these results to lower bounds on the degrees of $K(A[N])$ for any $N$,
but this cannot be done without knowing how $C(A, K, \ell)$ varies with $\ell$.
The primary obstacle here is the index of the image of $\Galabs{K}$ in $G_\ell(\Z_\ell)$,
which is conjectured to be bounded by a constant $C_1(A, K)$ independent of $\ell$.

\vskip 1em

In section~\ref{sec:mt-group-definition} we recall the definitions of Mumford--Tate group, $\ell$-adic monodromy group and weak Mumford--Tate triple, an axiomatisation of the properties of the groups and representations we will consider.
In section~\ref{sec:counting-chars} we bound the number of distinct characters of a maximal torus which can appear in such a representation.
In section~\ref{sec:multiplicity} we bound the multiplicity of absolutely irreducible components of this representation.
This is straightforward for the Mumford--Tate group but more difficult for the $\ell$-adic monodromy group.
Combining these two bounds gives Theorems~\ref{thm:main-bound-commutative} and~Theorem~\ref{thm:main-bound-noncommutative}.
Finally in section~\ref{sec:examples} we give some examples to show that Theorem~\ref{thm:main-bound-commutative} is sharp and to place a limit on the possible improvements of Theorem~\ref{thm:main-bound-noncommutative}.

\paragraph*{Acknowledgements.}
I am grateful to Emmanuel Ullmo who suggested to me the problem treated in this paper, and for regular conversations during its preparation.
I would also like to thank Barinder Banwait for his comments on an early version of the manuscript,
and the referee for their careful attention to detail.

\section{Mumford--Tate triples: Definitions} \label{sec:mt-group-definition}

We recall the definition of a weak Mumford--Tate triple, which abstracts the key properties of a Mumford--Tate group which we will use.
We recall also the definitions of the two examples of Mumford--Tate triple we will consider, namely the Mumford--Tate group and the $\ell$-adic monodromy group of an abelian variety.

The following definition is a slight modification of those used by Serre~\cite{serre:minuscule} and Wintenberger~\cite{wintenberger:mt-reps}.

\begin{definition}
Let $F$ be a field of characteristic zero and $E$ an algebraically closed field containing $F$.

A \defterm{weak Mumford--Tate triple} is a triple $(G, \rho, \Psi)$ where $G$ is an algebraic group over $F$, $\rho$ is a rational representation of $G$ and $\Psi$ is a set of cocharacters of $G \times_F E$ satisfying the following conditions:
\begin{enumerate}[(i)]
\item $G$ is a connected reductive group;
\item $\rho$ is faithful;
\item the images of all $G(E)$-conjugates of elements of $\Psi$ generate $G_E$.
\end{enumerate}

The \defterm{weights} of a Mumford--Tate triple $(G, \rho, \Psi)$ are the integers which appear as weights of $\rho \circ \nu$ (a representation of $\G_m$) for some $\nu \in \Psi$.

A weak Mumford--Tate triple $(G, \rho, \Psi)$ is called \defterm{pure} if $\rho(G)$ contains the torus $\G_m.\id$ of homotheties.
\end{definition}

\paragraph*{The Mumford--Tate group}

Let $A$ be an abelian variety over $\C$, of dimension~$g$.
The singular cohomology group $H^1(A(\C), \Q)$ is a vector space of dimension $2g$ over $\Q$.
Hodge theory gives a decomposition of $\C$-vector spaces
\[ H^1(A(\C), \Q) \otimes_\Q \C = H^{1,0}(A) \oplus H^{0,1}(A) \]
with $H^{1,0}(A)$ and $H^{0,1}(A)$ being mapped onto each other by complex conjugation
(so each has dimension $g$).

We define a cocharacter $\mu : \G_{m,\C} \to \GL_{2g,\C}$ by:
\begin{align*}
\mu(z) \,	& \text{acts as multiplication by } z \text{ on } H^{1,0}(A)	\\
		& \text{and as the identity on } H^{0,1}(A).
\end{align*}
The \defterm{Mumford--Tate group} of $A$ is defined to be the smallest algebraic subgroup $M$ of $\GL_{2g}$ defined over $\Q$ and such that $M_\C$ contains the image of $\mu$.

The triple consisting of the Mumford--Tate group, its defining representation $\rho : M \to \GL_{2g}$,
and the set of $\Aut(\C/\Q)$-conjugates of the cocharacter $\mu$ form a pure weak Mumford--Tate triple of weights $\zeroone$.
This is immediate from the definitions.

The functor $A \mapsto H^1(A(\C), \Z)$ is an equivalence of categories between complex abelian varieties and polarisable $\Z$-Hodge structures of type $\{ (-1,0), (0,-1) \}$.
Furthermore the endomorphism ring of $\rho$ as a representation of the Mumford--Tate group is equal to the endomorphism ring of $H^1(A(\C), \Q)$ as a $\Q$-Hodge structure, so
\[ \End \rho = \End A \otimes_\Z \Q. \]

\paragraph*{The $\ell$-adic algebraic monodromy group}
Now suppose that the abelian variety $A$ is defined over a number field $K$.
Its first $\ell$-adic cohomology group is a $\Q_\ell$-vector space of dimension $2g$,
isomorphic to the dual of the $\ell$-adic Tate module:
\[ H^1(A_{\bar{K}}, \Q_\ell) \isom \left( T_{\ell} A \otimes_{\Z_\ell} \Q_\ell \right)^\vee. \]

The Galois group $\Galabs{K}$ acts on the torsion points of $A(\bar{K})$, and this induces an action on $H^1(A_{\bar{K}}, \Q_\ell)$, or in other words a continuous representation
\[ \rho_\ell : \Galabs{K} \to \GL_{2g}(\Q_\ell). \]

The \defterm{$\ell$-adic algebraic monodromy group} of $A$ is the smallest algebraic subgroup $G_\ell$ of $\GL_{2g,\Q_\ell}$ whose $\Q_\ell$-points contain the image of $\rho_\ell$.
By working with the $\ell$-adic monodromy group instead of the image of $\rho_\ell$ directly,
we gain the ability to use the structure theory of algebraic groups.
On the other hand, we do not lose very much because $\Im \rho_\ell$ is known \cite{bogomolov:open-image} to be an open (and hence finite-index) subgroup of $G_\ell(\Q_\ell) \cap \GL_{2g}(\Z_\ell)$.

Pink~\cite{pink:mt} has proved that the identity component $G_\ell^\circ$ together with the representation~$\rho_\ell$ and a certain set $\Psi$ of cocharacters form a pure weak Mumford--Tate triple of weights~$\zeroone$.

By Faltings' Theorem~\cite{faltings:endomorphisms},
\[ \End \rho_\ell = \End A \otimes_\Z \Q_\ell. \]

\section{Bound for the number of characters} \label{sec:counting-chars}

Let $(G, \rho, \Psi)$ be a pure weak Mumford--Tate triple of weights~$\zeroone$,
and let $T$ be a maximal torus of $G$.
In this section we will give an upper bound for the number of distinct characters in $\rho_{|T}$ as a function of $\rk G$.

If $A$ has complex multiplication (in other words if $G$ is a torus) then this bound was obtained by Ribet~\cite{ribet:cm}.
Our method of proving the bound is inspired by applying Ribet's method to a maximal torus of $G$, but it is convenient to arrange it differently.

\begin{proposition} \label{prop:mt-weight-count}
Let $(G, \rho, \Psi)$ be a pure weak Mumford--Tate triple of weights $\zeroone$.
The number of distinct characters in $\rho_{|T}$ is at most~$2^{\rk G - 1}$.
\end{proposition}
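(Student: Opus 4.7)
The plan is as follows. Fix a maximal torus $T$ of $G_E$. For each $\nu \in \Psi$ we may conjugate into $T$; let $\Psi_T \subset X_*(T)$ denote the resulting union of Weyl orbits. Each weight $\chi$ of $\rho_{|T}$ satisfies $\langle \chi, \nu \rangle \in \{0,1\}$ for every $\nu \in \Psi_T$, because $\rho \circ \nu$ is a representation of $\G_m$ with weights in $\{0,1\}$. By purity, the homotheties $\G_m \cdot \id \subset \rho(G)$ lift through the faithful $\rho$ to a cocharacter $\zeta \colon \G_m \to Z(G) \subset T$ with $\rho \circ \zeta = \id_{\G_m}$, and consequently $\langle \chi, \zeta \rangle = 1$ for every weight $\chi$ of $\rho_{|T}$.

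The main technical step is to show that $\Psi_T$ spans $X_*(T)_\Q$ over $\Q$. Decompose $X_*(T)_\Q = X_*(Z(G))_\Q \oplus X_*(T \cap G^{\der})_\Q$. The Weyl group $W$ of $(G,T)$ acts trivially on the first summand and, on the second, splits it into the irreducible pieces attached to the simple factors of $G^{\der}$. The hypothesis that $G$-conjugates of $\Psi$ generate $G_E$ forces, for each simple factor, some $\nu \in \Psi$ to project there nontrivially, whereupon its Weyl orbit spans that piece by irreducibility. For the central summand, the same hypothesis, after projecting to $Z(G)^\circ$, says that the central projections of the elements of $\Psi$ generate $Z(G)^\circ$ as an algebraic group, and therefore span $X_*(Z(G))_\Q$.

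Given that $\Psi_T$ spans $X_*(T)_\Q$, choose $\nu_1, \ldots, \nu_n \in \Psi_T$ forming a $\Q$-basis, where $n = \rk G$. Evaluation $\chi \mapsto (\langle \chi, \nu_i \rangle)_{i=1}^n$ then embeds the set of weights of $\rho_{|T}$ injectively into $\{0,1\}^n$. Writing $\zeta = \sum_i a_i \nu_i$ with not all $a_i$ zero (since $\zeta \neq 0$), the purity condition $\langle \chi, \zeta \rangle = 1$ places the image inside the affine hyperplane $\sum_i a_i x_i = 1$; for any $i_0$ with $a_{i_0} \neq 0$, the coordinate $x_{i_0}$ is determined by the remaining $n-1$, so at most $2^{n-1}$ points of $\{0,1\}^n$ lie on this hyperplane, which gives the bound.

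The step most likely to need real care is the spanning claim for $\Psi_T$, and within it the central summand: purity supplies only the one cocharacter $\zeta$, so one must extract the rest from the generation hypothesis after projecting to $Z(G)^\circ$.
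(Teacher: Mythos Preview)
Your proof is correct and follows essentially the same approach as the paper's: conjugate $\Psi$ into $T$, show the resulting Weyl-stable set spans $X_*(T)_\Q$, and then use that the pairings with a basis drawn from this set take values in $\{0,1\}$, together with purity, to count. The only differences are cosmetic: you unpack the spanning argument via the decomposition into centre and simple factors (the paper asserts it in one line from Weyl-invariance plus condition~(iii)), and in the final step you cut $\{0,1\}^n$ by the hyperplane $\sum a_i x_i = 1$ coming from $\zeta$, whereas the paper equivalently completes $\{\mu_0\}$ to a basis by elements of $\Psi'$ and reads off the remaining $n-1$ coordinates directly.
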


\begin{proof}
Let $Y = \Hom(\G_{m, E}, T_E) \otimes_\Z \Q$ be the quasi-cocharacter space of $T$, where $E$ is an algebraically closed field of definition for $(G, \rho, \Psi)$.

Let $\Psi'$ be the set of all cocharacters of $T_E$ which are $G(E)$-conjugate to an element of $\Psi$.
Every cocharacter of $G$ has a $G(E)$-conjugate whose image is contained in $T_E$, so $\Psi'$ still satisfies condition (iii) in the definition of a weak Mumford--Tate triple.
Replacing $\Psi$ by $\Psi'$ does not change the weights of our Mumford--Tate triple.

Furthermore $\Psi'$ is closed under the action of the Weyl group of $G_E$ on $Y$.
So condition (iii) implies that $\Psi'$ spans $Y$ as a $\Q$-vector space.

Let $\Theta$ be a basis of $Y$ contained in $\Psi'$.
The character space of $T$ is dual to $Y$, so any character $\omega$ is determined by its inner products
$\innerprod{\omega}{\mu}$ for $\mu \in \Theta$.

Because our Mumford--Tate triple has weights~$\zeroone$, if $\mu$ is a character in $\rho_{|T}$ then these inner products can only have the values $0$ or $1$.
So there are at most $2^{\abs{\Theta}}$ distinct characters in $\rho_{|T}$, and $\abs{\Theta} = \rk G$.

We can use the fact that our Mumford--Tate triple is pure to improve the exponent to $\rk G - 1$.
We know that $\rho(G)$ contains the homotheties.
Since $\rho$ is faithful, there is a unique cocharacter $\mu_0 : \G_m \to G$ such that $\rho \circ \mu_0(z) = z.\id$.
We take $\Theta'$ to be a subset of $\Psi$ such that $\Theta' \cup \{ \mu_0 \}$ is a basis of $Y$.
Now $\innerprod{\omega}{\mu_0} = 1$ for all characters $\omega$ in $\rho_{|T}$,
so $\omega$ is determined by the values $\innerprod{\omega}{\mu}$ for $\mu \in \Theta'$.
We may repeat the previous argument with $\Theta$ replaced by~$\Theta'$.
\end{proof}

\begin{corollary} \label{cor:product-bound}
Let $(G, \rho, \Psi)$ be a pure weak Mumford--Tate triple of weights $\zeroone$.
Let $M$ be the maximum of the multiplicities of the irreducible components of $\rho$ (working over an algebraically closed base field).
Then
\[ \dim \rho \leq M \cdot 2^{\rk G - 1}. \]
\end{corollary}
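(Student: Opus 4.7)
My plan is to combine Proposition~\ref{prop:mt-weight-count} with the isotypic decomposition of $\rho$, after first establishing that each irreducible component of $\rho$ is a minuscule representation of $G$.

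First, I would fix a maximal torus $T$ of $G$ and decompose $\rho = \bigoplus_i V_i^{n_i}$ over the algebraic closure into isotypic components, with the $V_i$ pairwise non-isomorphic irreducibles and each $n_i \leq M$. Let $\Omega \subset X^*(T)$ be the set of characters appearing in $\rho_{|T}$, so that $\dim \rho = \sum_{\omega \in \Omega} \dim \rho^\omega$, and Proposition~\ref{prop:mt-weight-count} gives $|\Omega| \leq 2^{\rk G - 1}$. It therefore suffices to show that $\dim \rho^\omega \leq M$ for every $\omega \in \Omega$.

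The key step, and main obstacle, is to establish that each $V_i$ is minuscule, i.e.\ its $T$-weight spaces are all one-dimensional and its weights form a single Weyl orbit. I would argue this by contradiction. Suppose $\lambda$ is the highest weight of some $V_i$ and $\alpha$ a positive root with $\langle \lambda, \alpha^\vee \rangle \geq 2$. By the $\mathfrak{sl}_2$-string theorem, the three weights $\lambda$, $\lambda - \alpha$, $\lambda - 2\alpha$ all occur in $V_i$, hence in $\rho$. For a cocharacter $\nu$ of $T$ obtained by $G(E)$-conjugating some element of $\Psi$ into $T$, the $\zeroone$-condition forces $\langle \lambda, \nu \rangle - k \langle \alpha, \nu \rangle \in \zeroone$ for $k = 0, 1, 2$; a short case analysis on the value $\langle \lambda, \nu \rangle \in \zeroone$ gives $\langle \alpha, \nu \rangle = 0$. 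As in the proof of Proposition~\ref{prop:mt-weight-count}, condition~(iii) implies that such $\nu$'s span $X_*(T) \otimes \Q$, so $\alpha = 0$, a contradiction. Thus $\langle \lambda, \alpha^\vee \rangle \leq 1$ for every positive root, which is the minuscule condition.

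Finally, once each $V_i$ is minuscule, its weights form a single Weyl orbit of its highest weight, and distinct $V_i$'s (having distinct highest weights) have disjoint weight sets. Hence for each $\omega \in \Omega$ at most one $V_i$ contains $\omega$, and that $V_i^\omega$ is one-dimensional, so $\dim \rho^\omega = n_i \leq M$. Combining with $|\Omega| \leq 2^{\rk G - 1}$ yields $\dim \rho \leq M \cdot 2^{\rk G - 1}$, as required.
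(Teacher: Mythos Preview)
Your proof is correct and follows the same structure as the paper's: both deduce the bound by combining Proposition~\ref{prop:mt-weight-count} with the fact that each irreducible component of $\rho$ is minuscule (hence has multiplicity-one weights, and non-isomorphic components have disjoint weight sets). The only difference is that the paper cites Serre~\cite{serre:minuscule} for the minuscule property, whereas you supply a direct argument via $\mathfrak{sl}_2$-strings and the spanning property of the conjugates of $\Psi$; your argument is essentially Serre's, so this is not a genuinely different route.
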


\begin{proof}

Serre~\cite{serre:minuscule} showed that each irreducible component $\sigma$ in a weak Mumford--Tate triple of weights $\{ 0, 1\}$ is \defterm{minuscule},
that is, the characters in $\sigma_{|T}$ form a single orbit under the action of the Weyl group.
Serre only treated strong Mumford--Tate triples, i.e. weak Mumford--Tate triples satisfying the additional condition that all the cocharacters in $\Psi$ are contained in a single $\Aut(E/F)$-orbit.
However this extra condition is not used in his argument (see also \cite{pink:mt} Section~4 and~\cite{zarhin:minuscule}).

The characters of $T$ in a minuscule representation have multiplicity $1$, and non-isomorphic minuscule representations contain disjoint characters.
So the multiplicity of any character in $\rho_{|T}$ is equal to the multiplicity of the unique irreducible component which contains that character, and so
\[ \dim \rho \leq M \cdot \bigl( \text{the number of distinct characters in } \rho_{|T} \bigr). \]

The corollary now follows from Proposition~\ref{prop:mt-weight-count}.
\end{proof}

\section{Bound for the multiplicities} \label{sec:multiplicity}

Let $(G, \rho, \Psi)$ be a pure weak Mumford--Tate triple of weights $\zeroone$.
In this section we will bound the multiplicities of the absolutely irreducible components of $\rho \otimes_F \bar{F}$.
If $\End \rho$ is commutative, then it is immediate that all absolutely irreducible components of $\rho \otimes_F \bar{F}$ have multiplicity $1$.

Most of the section concerns the case in which the irreducible components of $\rho$ are pairwise non-isomorphic.
Because we use a result on division algebras coming from class field theory, we must assume that the field of definition of $\rho$ is a local field or a number field.
If $n = \rk G$, then each absolutely irreducible component has multiplicity at most $\alpha(n) \sqrt{n \log_e n}$ for a function $\alpha(n)$ satisfying the conditions of Theorem~\ref{thm:main-bound-noncommutative}.

To establish this bound, we introduce an invariant $u(G)$ for a reductive group $G$ such that for any $F$-irreducible representation of $G$, the multiplicity of its irreducible components over $\bar{F}$ is at most $u(G)$.
Then we use Landau's function (the maximum LCM of a set of positive integers with given sum) to obtain a bound for $u(G)$.

The above bounds together with Corollary~\ref{cor:product-bound} suffice to prove Theorem~\ref{thm:main-bound-commutative} for both the Mumford--Tate group and $\ell$-adic monodromy groups, and Theorem~\ref{thm:main-bound-noncommutative} for the Mumford--Tate group.
Proving Theorem~\ref{thm:main-bound-noncommutative} for the $\ell$-adic monodromy group requires additional work because even when an abelian variety satisfies the condition of Theorem~\ref{thm:main-bound-noncommutative}, its associated $\ell$-adic representations might not satisfy the corresponding condition of Theorem~\ref{thm:mt-triple-bound-noncommutative}.

\subsection{The commutative endomorphism case}

\begin{theorem} \label{thm:mt-triple-bound-commutative}
Let $(G, \rho, \Psi)$ be a pure weak Mumford--Tate triple of weights $\zeroone$.
If $\End \rho$ is commutative, then $\rk G \geq \log_2 \dim \rho + 1$.
\end{theorem}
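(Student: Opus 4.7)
The plan is to reduce the theorem to Corollary~\ref{cor:product-bound} by showing that, under the commutative endomorphism hypothesis, the maximum multiplicity $M$ of absolutely irreducible components equals $1$. Once $M = 1$, the corollary gives $\dim \rho \leq 2^{\rk G - 1}$, and taking $\log_2$ yields exactly the bound $\rk G \geq \log_2 \dim \rho + 1$.

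To establish $M = 1$, I would start by decomposing the base change as
\[ \rho \otimes_F \bar{F} \isom \bigoplus_i \sigma_i^{m_i}, \]
where the $\sigma_i$ are pairwise non-isomorphic absolutely irreducible representations of $G_{\bar F}$. By Schur's lemma (applied to the reductive, in particular linearly reductive, setting over $\bar F$),
\[ \End(\rho \otimes_F \bar{F}) \isom \prod_i \operatorname{Mat}_{m_i}(\bar{F}). \]

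Next I would invoke the standard fact that endomorphisms of a rational representation commute with base change, so that
\[ \End(\rho \otimes_F \bar{F}) \isom \End(\rho) \otimes_F \bar{F}. \]
Since $\End \rho$ is commutative by hypothesis, the right-hand side is commutative, and therefore so is $\prod_i \operatorname{Mat}_{m_i}(\bar{F})$. This forces $m_i = 1$ for every $i$, so every absolutely irreducible component of $\rho \otimes_F \bar{F}$ appears with multiplicity $1$, giving $M = 1$.

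There is no substantive obstacle here; the only step that requires a moment of care is the base-change isomorphism $\End(\rho) \otimes_F \bar{F} \isom \End(\rho \otimes_F \bar{F})$, which is automatic because $\Hom_G$ is the kernel of a linear map between finite-dimensional $F$-vector spaces and so commutes with the flat extension $F \to \bar F$. Once $M = 1$ is in hand, applying Corollary~\ref{cor:product-bound} concludes the proof.
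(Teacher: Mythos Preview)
Your proposal is correct and follows exactly the same approach as the paper: show that commutativity of $\End\rho$ forces all absolutely irreducible components to have multiplicity~$1$, then apply Corollary~\ref{cor:product-bound}. The paper merely asserts the multiplicity-$1$ step in one line, whereas you spell out the Schur's lemma and base-change justification; your added detail is standard and accurate.
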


\begin{proof}
Let $F$ be the field of definition of $\rho$.
Since $\End \rho$ is commutative, each irreducible component of $\rho \otimes_F \bar{F}$ has multiplicity $1$.
So the theorem follows immediately from Corollary~\ref{cor:product-bound}.
\end{proof}

Let $A$ be an abelian variety, $G$ its Mumford--Tate group or $\ell$-adic monodromy group, and $\rho$ the associated representation.
We have observed that $\End \rho = \End A \otimes_\Z F$ where $F = \Q$ or $\Q_\ell$ as appropriate, so that if $\End A$ is commutative the same is true of $\End \rho$.
Hence Theorem~\ref{thm:main-bound-commutative} follows from Theorem~\ref{thm:mt-triple-bound-commutative}.
The $\log_2 \dim \rho +1$ becomes $\log_2 \dim A +2$ because $\dim \rho = 2\dim A$.

\subsection{Multiplicity of irreducible representations and $u(G)$}

\begin{definition}
Let $G$ be a reductive group defined over the field $F$.
Let $T$ be a maximal torus of $G$ and $\Lambda = \Hom(T_{\bar{F}}, \G_m)$ the character group of $T$.
Let $\Lambda_0$ be the subgroup of $\Lambda$ generated by the roots of $G$ and characters which vanish on $T \cap G^\der$.
The roots of $G$ span the quasi-character space of $T_{\bar{F}} \cap G_{\bar{F}}^\der$ as a $\Q$-vector space so $\Lambda_0$ spans $\Lambda \otimes_\Z \Q$.
It follows that $\Lambda/\Lambda_0$ is finite.
(In fact $\Lambda/\Lambda_0$ is canonically isomorphic to the dual of the centre of $G^\der(\bar{F})$, which is a finite abelian group.)

Hence we can define $u(G)$ to be the exponent of $\Lambda/\Lambda_0$.
\end{definition}

\begin{lemma} \label{lem:tits-brauer-order}
Let $G$ be a reductive group over a field $F$ and $\rho$ an $F$-irreducible representation of $G$.
Let $D$ be the endomorphism ring of $\rho$ and $E$ the centre of $D$.
Then the order of $[D]$ in $\Br E$ divides $u(G)$.
\end{lemma}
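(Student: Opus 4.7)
The plan is to decompose $\rho$ over $\bar{F}$, identify $[D]$ with a Tits invariant attached to a highest weight of one of its absolutely irreducible constituents, and then bound the order of this invariant by showing that it depends only on the class of the weight modulo $\Lambda_0$.

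First, I would use that $\rho$ is $F$-irreducible to obtain, by a standard Wedderburn-type argument, a decomposition
\[ \rho \otimes_F \bar{F} \cong \bigoplus_{i=1}^r \sigma_i^{\oplus m}, \]
where the absolutely irreducible constituents $\sigma_1, \ldots, \sigma_r$ form a single $\Gal(\bar{F}/F)$-orbit and all occur with the same multiplicity $m$. The centre $E$ of $D$ is then the fixed field of $\Stab(\sigma_1)$, and $D$ is a central simple $E$-algebra of degree $m$; in particular $[D]$ already has order dividing $m$ in $\Br E$. After fixing a Borel subgroup of $G$ defined over $E$, I take $\lambda_1 \in \Lambda$ to be the highest weight of $\sigma_1$, which is fixed by $\Gal(\bar{F}/E)$.

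I would then invoke Tits' classification of $F$-irreducible representations of reductive groups (Tits, \emph{Représentations linéaires irréductibles d'un groupe réductif sur un corps quelconque}), which identifies $[D]$ in $\Br E$ with the \emph{Tits invariant} $\beta_{\lambda_1}$ of $\lambda_1$, and shows that the assignment $\lambda \mapsto \beta_\lambda$ is a group homomorphism from the lattice of $\Gal(\bar{F}/E)$-fixed weights to $\Br E$. The root lattice lies in its kernel because its elements appear in the adjoint representation, which is defined over $F$ with trivial endomorphism class; characters vanishing on $T \cap G^\der$ lie in the kernel because they factor through the $F$-torus $G/G^\der$ and so give one-dimensional representations. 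Consequently $\lambda \mapsto \beta_\lambda$ factors through the finite group $\Lambda/\Lambda_0 \cong \widehat{Z(G^\der)(\bar{F})}$, so the order of $[D] = \beta_{\lambda_1}$ divides the order of $\lambda_1 + \Lambda_0$ in $\Lambda/\Lambda_0$, which in turn divides the exponent $u(G)$ of $\Lambda/\Lambda_0$.

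The main difficulty is the appeal to Tits' theorem itself: one must verify both the identification $[D] = \beta_{\lambda_1}$ and the factorisation of $\beta$ through $\Lambda/\Lambda_0$, taking care to match the quotient $\Lambda/\Lambda_0$ used in our definition with the character group of the centre of $G^\der$ implicit in Tits' formulation. Once these are in place, the divisibility is immediate.
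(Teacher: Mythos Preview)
Your proposal is correct and follows essentially the same route as the paper: both identify $[D]$ (up to sign) with the value of Tits' homomorphism $\alpha_{G,E}\colon \Lambda^{\Gamma} \to \Br E$ at the highest weight of an absolutely irreducible constituent, and then use Tits' result that this homomorphism kills $\Lambda_0$. The only cosmetic difference is that the paper cites Tits directly (\cite{tits:irred-reps}, Corollary~3.5) for the vanishing on $\Lambda_0$, whereas you sketch reasons via the adjoint representation and one-dimensional characters; also note that $\beta$ is only defined on $\Lambda^{\Gamma}$, so strictly it factors through $\Lambda^{\Gamma}/\Lambda_0^{\Gamma}$, which injects into $\Lambda/\Lambda_0$ and hence has exponent dividing $u(G)$.
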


\begin{proof}
Fix a base $\Delta$ for the root system of $G$ with respect to $T$.
When we refer to the action of $\Galabs{F}$ on the character group $\Lambda$ below,
this is the natural action twisted by the Weyl group so that it preserves the set $\Delta$
(this is the same action used in~\cite{tits:irred-reps}).

Let $\sigma$ be an absolutely irreducible component of $\rho \otimes_F \bar{F}$,
and $\lambda_\sigma \in \Lambda$ the highest weight of $\sigma$.
Let $\Gamma$ be the subgroup of $\Galabs{F}$ fixing $\lambda$.
Then $E$ is isomorphic to the subfield of $\bar{F}$ fixed by~$\Gamma$.

Tits defined a map
\[ \alpha_{G,E} : \Lambda^\Gamma \to \Br E \]
as follows:
if $\lambda \in \Lambda^\Gamma$ is dominant then there is a unique isomorphism class of $E$-irreducible representations of $G$ with highest weight $\lambda$.
The endomorphism ring of such a representation is a division algebra with centre $E$.
We define $\alpha_{G,E}(\lambda)$ to be the inverse of the class of this division algebra in $\Br E$.
Tits showed that this map on dominant weights is additive so it extends to a homomorphism $\Lambda^\Gamma \to \Br E$.
He also showed that $\alpha_{G,E}$ is trivial on $\Lambda_0^\Gamma$ (\cite{tits:irred-reps} Corollary~3.5).

In our case we have $[D]^{-1} = \alpha_{G,E}(\lambda_\sigma)$.
Since $[D]$ is in the image of $\alpha_{G,E}$, it follows that the order of $[D]$ in $\Br E$ divides the exponent of $\Lambda^\Gamma/\Lambda_0^\Gamma$.
But the latter is a subgroup of $\Lambda/\Lambda_0$, so its exponent divides $u(G)$.
\end{proof}

\begin{corollary} \label{cor:irred-rep-multiplicity}
Let $G$ be a reductive group defined over a number field or a local field $F$.
Let $\rho$ be an $F$-irreducible representation of $G$.
Then the multiplicity of each absolutely irreducible component of $\rho \otimes_F \bar{F}$ divides $u(G)$.
\end{corollary}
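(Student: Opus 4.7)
The plan is to identify the common multiplicity $m$ of the absolutely irreducible components of $\rho \otimes_F \bar{F}$ with a classical invariant of the division algebra $D = \End \rho$, and then appeal to Lemma~\ref{lem:tits-brauer-order}.

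First, I would invoke Schur's lemma: since $\rho$ is $F$-irreducible, $D$ is a finite-dimensional division algebra over $F$ with centre $E$, itself a local or number field. Decomposing
\[ \rho \otimes_F \bar{F} \isom \bigoplus_{i=1}^{r} m_i \sigma_i \]
into absolutely irreducible components, the fact that $\rho$ is $F$-irreducible together with Galois descent shows that the $\sigma_i$ form a single $\Galabs{F}$-orbit with a common multiplicity $m$, and the stabiliser of any $\sigma_i$ is exactly the subgroup $\Gamma$ appearing in Lemma~\ref{lem:tits-brauer-order}, so $r = [E:F]$.

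Next, I would compute $D \otimes_F \bar{F}$ in two ways. Directly,
\[ D \otimes_F \bar{F} \isom \End(\rho \otimes_F \bar{F}) \isom M_m(\bar{F})^{[E:F]}. \]
On the other hand, $D$ is central simple over $E$ and $E \otimes_F \bar{F} \isom \bar{F}^{[E:F]}$ in characteristic zero, which yields
\[ D \otimes_F \bar{F} \isom (D \otimes_E \bar{F})^{[E:F]} \isom M_d(\bar{F})^{[E:F]}, \]
where $d = \sqrt{\dim_E D}$ is the Schur index of $D$. Comparing block sizes forces $m = d$.

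Finally, I would invoke the classical theorem that over a local field or a number field the Schur index of a central simple algebra equals its period, i.e.\ the order of its class in $\Br E$: in the local case this comes from the parametrisation of $\Br E$ by $\Q/\Z$ via local class field theory, and in the global case from the Brauer--Hasse--Noether theorem reducing to the local statement. Combined with Lemma~\ref{lem:tits-brauer-order}, this shows that $m = d$ divides $u(G)$. I do not expect any real obstacle: the substance sits in Lemma~\ref{lem:tits-brauer-order}, and what remains is a composition of standard ring-theoretic and arithmetic facts, the only essential use of the hypothesis on $F$ being the period--index equality.
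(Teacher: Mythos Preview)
Your proposal is correct and follows essentially the same route as the paper: identify the multiplicity with the Schur index $\sqrt{\dim_E D}$ of $D=\End\rho$, invoke the period--index equality over local and number fields, and conclude via Lemma~\ref{lem:tits-brauer-order}. The only difference is that you spell out the computation of $D\otimes_F\bar F$ to justify $m=\sqrt{\dim_E D}$, whereas the paper simply states this fact.
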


\begin{proof}
Let $D = \End \rho$ and let $E$ be the centre of $D$.
Then the multiplicity of any absolutely irreducible component of $\rho \otimes_F \bar{F}$ is
$\sqrt{\dim_{E} D}$.

Since $F$ is a number field or a local field, it follows from class field theory that $\sqrt{\dim_{E} D}$ is equal to the order of $[D]$ in $\Br E$ (see e.g. \cite{pierce:algebras}~Theorem~18.6).

Now apply Lemma~\ref{lem:tits-brauer-order}.
\end{proof}

The following theorem is obtained by combining Corollaries~\ref{cor:product-bound} and~\ref{cor:irred-rep-multiplicity}.

\begin{theorem} \label{thm:mt-triple-bound-noncommutative}
Let $(G, \rho, \Psi)$ be a pure weak Mumford--Tate triple of weights $\zeroone$ defined over a number field or a local field $F$.
If the $F$-irreducible components of $\rho$ are pairwise non-isomorphic, then
\[ \rk G + \log_2 u(G) \geq \log_2 \dim \rho + 1. \]
\end{theorem}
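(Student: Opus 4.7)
The plan is to assemble the theorem directly from the two corollaries flagged in the preceding remark. First I would use that $G$ is reductive and $F$ has characteristic zero, so $\rho$ decomposes as a direct sum $\rho = \bigoplus_i \rho_i$ of $F$-irreducible summands. By the hypothesis, the $\rho_i$ are pairwise non-isomorphic over $F$. Corollary~\ref{cor:irred-rep-multiplicity} then says that for each $i$, every absolutely irreducible component of $\rho_i \otimes_F \bar{F}$ occurs with multiplicity dividing $u(G)$, and so in particular with multiplicity at most $u(G)$.

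Next I would argue that the multiplicities do not get combined across different indices. For $i \neq j$, one has $\Hom_G(\rho_i, \rho_j) = 0$ by Schur's lemma, and since $\Hom_G$ commutes with the flat base change $\bar{F}/F$ (in characteristic zero, for representations of a reductive group), it follows that $\Hom_{G_{\bar F}}(\rho_i \otimes_F \bar F,\, \rho_j \otimes_F \bar F) = 0$. Hence the sets of absolutely irreducible components appearing in $\rho_i \otimes_F \bar F$ and $\rho_j \otimes_F \bar F$ are disjoint. Consequently the maximum $M$ of Corollary~\ref{cor:product-bound} equals the maximum over $i$ of the multiplicity of an absolutely irreducible component of $\rho_i \otimes_F \bar F$, which by the previous paragraph is at most $u(G)$.

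Plugging this into Corollary~\ref{cor:product-bound} gives $\dim \rho \leq u(G) \cdot 2^{\rk G - 1}$; taking $\log_2$ yields $\rk G + \log_2 u(G) \geq \log_2 \dim \rho + 1$, which is the claim. Essentially the whole content of the theorem lives in the two corollaries, so there is no serious obstacle; the only non-formal step is the disjointness argument of the second paragraph, which uses only Schur's lemma and the flatness of base change for Hom of algebraic group representations.
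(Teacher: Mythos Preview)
Your proof is correct and follows exactly the route the paper indicates: combine Corollary~\ref{cor:product-bound} with Corollary~\ref{cor:irred-rep-multiplicity}. The only thing you add is an explicit justification (via Schur and base change of $\Hom$) that distinct $F$-irreducible summands contribute disjoint sets of absolutely irreducible constituents, a point the paper leaves implicit in the phrase ``obtained by combining.''
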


If $A$ is a complex abelian variety, $G$ its Mumford--Tate group and $\rho$ the associated representation, then $\End \rho = \End A \otimes_\Z \Q$ so the hypothesis that the simple abelian subvarieties of $A$ are pairwise non-isogenous implies that the irreducible components of $\rho$ are pairwise non-isomorphic.
Hence Theorem~\ref{thm:mt-triple-bound-noncommutative}, together with the bounds for $u(G)$ in section~\ref{subsec:uG}, implies Theorem~\ref{thm:main-bound-noncommutative} for the Mumford--Tate group.

\subsection{Multiplicities in $\ell$-adic representations}

Let $A$ be an abelian variety over a number field whose simple abelian subvarieties are pairwise non-isogenous.
Let $G_\ell$ be the $\ell$-adic monodromy group of $A$ and $\rho_\ell$ the associated $\ell$-adic representation.
We shall show that the multiplicities of irreducible components of $\rho_\ell \otimes_{\Q_\ell} \bar\Q_\ell$ are bounded above by $u(G_\ell^\circ)$ and hence prove Theorem~\ref{thm:main-bound-noncommutative}.

By Faltings' Theorem, if $B$ and $B'$ are non-isogenous simple abelian varieties, then the associated $\ell$-adic representations have no common subrepresentations.
Hence it will suffice to suppose that $A$ is simple.

By Faltings' Theorem, $\End \rho_\ell = \End A \otimes_\Z \Q_\ell$.
This implies that the multiplicities of absolutely irreducible components of $\rho_\ell \otimes_{\Q_\ell} \bar\Q_\ell$ are independent of $\ell$.
We will use results of Serre and Pink to show that $u(G_\ell^\circ)$ is also independent of $\ell$, and then we can consider all $\ell$ at once to show that the multiplicities are bounded above by $u(G_\ell^\circ)$.

\begin{lemma} \label{lem:u-indep-of-ell}
$u(G_\ell^\circ)$ is independent of $\ell$.
\end{lemma}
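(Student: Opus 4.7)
The strategy is to reduce the lemma to a known $\ell$-independence result for the root datum of $G_\ell^\circ$ due to Serre and Pink, together with the observation that the invariant $u$ depends only on the root datum.

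First I would verify that $u(G)$ is a root-datum invariant. The subgroup of $\La = X^*(T)$ consisting of characters vanishing on $T \cap G^\der$ coincides with $X^*(G/G^\der)$; concretely it is the set of $\chi \in \La$ with $\innerprod{\chi}{\alpha^\vee} = 0$ for every coroot $\alpha^\vee$, which is visibly determined by the root datum. Combined with the root sublattice, this determines $\La_0$, and hence the exponent $u(G)$ of $\La/\La_0$ depends only on the root datum $(\La, \Phi, \La^\vee, \Phi^\vee)$ of $G$.

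Next I would invoke the relevant independence-of-$\ell$ results. Serre showed that the rank of $G_\ell^\circ$ and the formal character of $\rho_\ell$ restricted to a maximal torus of $G_\ell^\circ$ (as a subgroup of $\GL_{2g, \Q_\ell}$) are independent of $\ell$; Pink~\cite{pink:mt} refined this and established that the abstract root datum of $G_\ell^\circ$ over $\bar\Q_\ell$, up to isomorphism, does not depend on $\ell$. Combining this with the first step gives $u(G_\ell^\circ) = u(G_{\ell'}^\circ)$ for any two primes $\ell, \ell'$.

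The main obstacle, as usual with such reductions, is extracting the correct formulation from Pink's machinery: what is needed is independence of the complete root datum, not merely of the rank, semisimple rank, or abstract root system. Once that statement is in hand, the first step is essentially a rephrasing of the definition of $u$ and the conclusion is immediate.
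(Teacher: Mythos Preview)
Your strategy is the same as the paper's: reduce to an $\ell$-independence statement about the lattice data underlying $u$, then cite known results. The paper, however, is more explicit about the mechanism and cites different sources for the key step. Rather than invoking a black-box claim that Pink proved independence of the full root datum, the paper uses Serre's Frobenius tori to produce a single torus $T_v$ over $\Q$ which, after base change to $\Q_\ell$ and $\Q_{\ell'}$, is conjugate to a maximal torus of each $G_\ell^\circ$; this identifies the character lattices $\Lambda$ directly and makes the formal characters of $\rho_\ell$ and $\rho_{\ell'}$ correspond. The passage from formal character to the sublattice $\Lambda_0$ is then attributed to Larsen--Pink~\cite{larsen-pink:inv-dims}, not to Pink's paper~\cite{pink:mt}. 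Your worry about ``extracting the correct formulation from Pink's machinery'' is well-founded: the statement you want is not stated there in that form, and the paper sidesteps this by using the Frobenius-torus/Larsen--Pink route instead. Your first paragraph, isolating $u$ as a root-datum invariant via the description of characters vanishing on $T\cap G^\der$ as those orthogonal to all coroots, is a clean way to package what the paper leaves implicit.
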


\begin{proof}
Let $\ell$, $\ell'$ be any two rational primes.
Via $\rho_\ell$, we view $G_\ell^\circ$ as a subgroup of $\GL_{2g,\Q_\ell}$.

For a finite place $v$ of $K$, let $T_v$ be the Frobenius torus of $A$ in the sense of Serre~\cite{serre:frob-tori}.
Serre showed that we can choose $v$ such that
$T_{v,\Q_\ell}$ is $\GL_{2g,\Q_\ell}$-conjugate to a maximal torus of $G_\ell^\circ$,
and such that the analogous property holds for $\ell'$.

Hence we get maximal tori $T_{v,\ell}$ of $G_\ell$ and $T_{v,\ell'}$ of $G_\ell'$ together with an isomorphism $\Lambda(T_{v,\ell}) \cong \Lambda(T_v) \cong \Lambda(T_{v,\ell'})$.
Furthermore, under this isomorphism,
the formal character of $\rho_\ell$ corresponds to the formal character of $\rho_{\ell'}$.

As observed by Larsen-Pink~\cite{larsen-pink:inv-dims}, the formal character of a faithful irreducible representation of a reductive group determines the root lattice $\Lambda_0$.
Hence $\Lambda/\Lambda_0(G_\ell^\circ) \cong \Lambda/\Lambda_0(G_{\ell'}^\circ)$ so $u(G_\ell^\circ) = u(G_{\ell'}^\circ)$.
\end{proof}

We will also need the following lemma on pure weak Mumford--Tate triples.
Let $(G, \rho, \Psi)$ be a pure weak Mumford--Tate triple.
Because it is pure, there is a cocharacter $\mu_0$ of $G$ such that $\rho \circ \mu_0(z) = z.\id$.
Let $H$ be the identity component of $\ker \det \rho \subset G$.
Then the quasi-cocharacter space of a maximal torus $T$ splits as
\[ \left( \Hom(\G_m, T \cap H) \otimes_\Z \Q \right) \oplus \Q.\mu_0. \tag{*} \label{eqn:hodge-group-decomposition} \]

\begin{lemma} \label{lem:hodge-group-weights}
Let $(G, \rho, \Psi)$ be a pure weak Mumford--Tate triple of weights $0$ and $1$, with multiplicities $g_0$ and $g_1$ respectively.
Choose $\mu \in \Psi$ and let $T$ be a maximal torus of $G$ containing the image of $\mu$.
Suppose that $\mu$ splits as $\mu_H + r \mu_0$ in the decomposition \eqref{eqn:hodge-group-decomposition}.
Then for all characters $\omega$ in $\rho_{|T}$,
\[ \innerprod{\omega}{\mu_H} = \frac{g_0}{g_0+g_1} \text{ or } \frac{-g_1}{g_0+g_1}. \]
\end{lemma}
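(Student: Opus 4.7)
The plan is to determine $r$ via a trace (determinant) condition and then read off $\langle\omega,\mu_H\rangle$ for each weight value of $\mu$.

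First I would record two elementary facts about the pairing with $\mu_0$ and $\mu$. Since the triple is pure, $\rho\circ\mu_0(z)=z\cdot\id$, so every character $\omega$ appearing in $\rho_{|T}$ satisfies $\langle\omega,\mu_0\rangle=1$. Since the weights of the triple are $\zeroone$, and $\mu\in\Psi$, every such $\omega$ also satisfies $\langle\omega,\mu\rangle\in\zeroone$. Using the decomposition $\mu=\mu_H+r\mu_0$, this gives
\[ \langle\omega,\mu_H\rangle = \langle\omega,\mu\rangle - r \in \{-r,\ 1-r\}, \]
with value $1-r$ exactly when $\omega$ is a weight-$1$ character of $\mu$ and value $-r$ exactly when $\omega$ is a weight-$0$ character of $\mu$.

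Next I would pin down $r$ using the fact that $\mu_H$ factors through $T\cap H$, where $H=(\ker\det\rho)^\circ$. This means $\det\rho\circ\mu_H$ is trivial, i.e.\ the sum of $\langle\omega,\mu_H\rangle$ over all characters $\omega$ of $\rho_{|T}$, counted with multiplicity, is zero. Grouping the $g_1$ characters with $\langle\omega,\mu\rangle=1$ and the $g_0$ characters with $\langle\omega,\mu\rangle=0$, this reads
\[ g_1(1-r) + g_0(-r) = 0, \]
so $r = g_1/(g_0+g_1)$. Substituting back in the two cases of the previous display yields $\langle\omega,\mu_H\rangle = g_0/(g_0+g_1)$ or $-g_1/(g_0+g_1)$, which is exactly the claim.

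There is no real obstacle here; the only thing one must be careful about is that the decomposition (\ref{eqn:hodge-group-decomposition}) is taken after tensoring with $\Q$, so $r$ and $\mu_H$ live in the rational quasi-cocharacter space (which is why $r=g_1/(g_0+g_1)$ need not be an integer) and the pairings $\langle\omega,-\rangle$ must be interpreted as the extension to this rational space. Once that is set up, the argument above — purity gives $\langle\omega,\mu_0\rangle=1$, the weight condition gives $\langle\omega,\mu\rangle\in\zeroone$, and the determinant condition pins down $r$ — is a one-line computation.
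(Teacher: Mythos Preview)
Your proof is correct and matches the paper's own argument essentially line for line: purity gives $\langle\omega,\mu_0\rangle=1$, the determinant condition $\langle\det\rho,\mu_H\rangle=0$ determines $r=g_1/(g_0+g_1)$, and the weight condition $\langle\omega,\mu\rangle\in\{0,1\}$ then gives the two values of $\langle\omega,\mu_H\rangle$. The only cosmetic difference is that the paper computes $\langle\det\rho,\mu\rangle$ in two ways rather than directly summing the $\langle\omega,\mu_H\rangle$, but this is the same computation.
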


\begin{proof}
By the definition of $\mu_0$, $\innerprod{\omega}{\mu_0} = 1$ for every character $\omega$ in $\rho_{|T}$.
Hence
\[ \innerprod{\det \rho}{\mu_0} = \dim \rho = g_0 + g_1. \]

Because $\det \rho$ is trivial on~$H$, $\innerprod{\det \rho}{\mu_H} = 0$.
Therefore
\[ \innerprod{\det \rho}{\mu} = \innerprod{\det \rho}{r \mu_0} = r(g_0 + g_1). \]
On the other hand, 
\[ \innerprod{\det \rho}{\mu} = g_0.0 + g_1.1 = g_1 \]
so $r = g_1/(g_0 + g_1)$.
Combining with $\innerprod{\omega}{\mu} = 0$ or $1$ gives the result.
\end{proof}

\begin{proposition} \label{prop:l-adic-multiplicity}
Let $A$ be a simple abelian variety defined over a number field, and $G_\ell$ its $\ell$-adic monodromy group.
The multiplicity of every absolutely irreducible component of $\rho_\ell \otimes_{\Q_\ell} \bar{\Q_\ell}$ divides $u(G_\ell^\circ)$.
\end{proposition}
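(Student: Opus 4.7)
The plan is to reduce, after a finite extension of $K$, to the case where $G_\ell$ is connected and the endomorphism algebra of $\rho_\ell$ as a $G_\ell^\circ$-representation coincides with the global division algebra $D := \End A \otimes_\Z \Q$, and then to apply Corollary~\ref{cor:irred-rep-multiplicity} to each $\Q_\ell$-irreducible component of $\rho_\ell$.

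Since $G_\ell^\circ$, the ring $\End A$, and the multiplicities of absolutely irreducible components of $\rho_\ell \otimes_{\Q_\ell} \bar{\Q_\ell}$ are all unchanged by replacing $K$ by a finite extension, I may assume at the outset that every endomorphism of $A_{\bar K}$ is defined over $K$ and that $\Kconn = K$, so that $G_\ell$ itself is connected. Faltings' Theorem then yields
\[ \End_{G_\ell^\circ}(\rho_\ell) = \End_{G_\ell}(\rho_\ell) = D \otimes_\Q \Q_\ell. \]
Since $A$ is simple, $D$ is a division algebra with centre a number field $E$, and $D \otimes_\Q \Q_\ell = \prod_{v \mid \ell} D_v$, where $v$ runs over the places of $E$ above $\ell$ and each $D_v$ is a central division algebra over the local field $E_v$. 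Correspondingly, as a $\Q_\ell$-representation of $G_\ell^\circ$, $\rho_\ell$ decomposes as $\bigoplus_{v \mid \ell} \rho_{\ell,v}$, where each $\rho_{\ell,v}$ is $\Q_\ell$-irreducible with endomorphism ring $D_v$; the distinct summands are pairwise non-isomorphic, so their base changes to $\bar{\Q_\ell}$ have disjoint sets of absolutely irreducible components.

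Applying Corollary~\ref{cor:irred-rep-multiplicity} to the reductive $\Q_\ell$-group $G_\ell^\circ$ and each $\Q_\ell$-irreducible representation $\rho_{\ell,v}$, the multiplicity of every absolutely irreducible component of $\rho_{\ell,v} \otimes_{\Q_\ell} \bar{\Q_\ell}$---which equals the local Schur index $\sqrt{[D_v : E_v]}$ of $\rho_{\ell,v}$---divides $u(G_\ell^\circ)$. Since each absolutely irreducible component of $\rho_\ell \otimes \bar{\Q_\ell}$ belongs to exactly one summand $\rho_{\ell,v} \otimes \bar{\Q_\ell}$, its multiplicity in $\rho_\ell \otimes \bar{\Q_\ell}$ equals its multiplicity in that summand, and the proposition follows.

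The delicate point is the reduction ensuring $\End_{G_\ell^\circ}(\rho_\ell) = D \otimes_\Q \Q_\ell$ rather than a strictly larger algebra: this relies on the standard fact that every endomorphism of $A_{\bar K}$ is defined over a finite extension of $K$, combined with Faltings' Theorem applied over $\Kconn$ to identify $\End_{G_\ell^\circ}(\rho_\ell)$ with $\End A_\Kconn \otimes_\Z \Q_\ell$.
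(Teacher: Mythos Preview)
Your argument has a genuine gap at the step where you assert that each $D_v := D \otimes_E E_v$ is a central \emph{division} algebra over $E_v$. In general $D_v$ is only a central simple algebra: one has $D_v \cong M_{k_v}(\Delta_v)$ for a division algebra $\Delta_v$ with $\dim_{E_v}\Delta_v = m_v^2$ and $k_v m_v = m$, where $m^2 = \dim_E D$. Consequently the corresponding isotypic component of $\rho_\ell$ is not irreducible but rather $\sigma_v^{\oplus k_v}$ with $\sigma_v$ a $\Q_\ell$-irreducible representation and $\End\sigma_v \cong \Delta_v$. Applying Corollary~\ref{cor:irred-rep-multiplicity} to $\sigma_v$ yields only that the \emph{local} Schur index $m_v$ divides $u(G_\ell^\circ)$. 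But the multiplicity of each absolutely irreducible constituent of $\rho_\ell\otimes\bar\Q_\ell$ is $k_v m_v = m$, the \emph{global} index, and for a fixed $\ell$ one may well have $m_v < m$ for every $v\mid\ell$ (indeed $D$ can split completely above $\ell$). So your argument, as it stands, proves nothing beyond $\lcm_{v\mid\ell} m_v \mid u(G_\ell^\circ)$, which can be strictly weaker than $m \mid u(G_\ell^\circ)$.

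This is exactly why the paper's proof takes a longer route. When $E$ is a CM field, the Albert--Brauer--Hasse--Noether theorem gives $m=\lcm_\lambda m_\lambda$ over all finite places $\lambda$ of $E$; to capture each $m_\lambda$ one must pass to the prime $\ell'$ lying under $\lambda$ and apply Lemma~\ref{lem:tits-brauer-order} to $G_{\ell'}^\circ$, then use Lemma~\ref{lem:u-indep-of-ell} to identify $u(G_{\ell'}^\circ)=u(G_\ell^\circ)$. When $E$ is totally real, the Brauer class of $D$ may be supported at archimedean places, invisible to any $\ell$-adic representation, and the paper instead uses a direct argument with weak Hodge cocharacters (Lemma~\ref{lem:hodge-group-weights}) to force $2\mid u(G_\ell^\circ)$. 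Your reduction to $\Kconn$ is fine, but the single-$\ell$ application of Corollary~\ref{cor:irred-rep-multiplicity} cannot replace these two ingredients.
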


\begin{proof}
Let $D = \End A \otimes_{\Z} \Q$ be the endomorphism algebra of $A$,
and let $E$ be the centre of $D$.
Let $m^2 = \dim_{E} D$.

By Faltings' Theorem, $\End \rho_\ell = D \otimes_{\Q} \Q_\ell$.
This is a product of simple algebras, each of dimension $m^2$ over its centre.
So every absolutely irreducible component of $\rho_\ell \otimes_{\Q_\ell} \bar{\Q_\ell}$ has multiplicity $m$,
and it will suffice to show that $m$ divides $u(G_\ell^\circ)$.

There are two cases: $E$ is totally real or a CM field.

\paragraph*{Case 1.  $E$ is totally real}
In this case the Albert classification of endomorphism algebras of abelian varieties implies that $m \leq 2$, so it will suffice to show that $2$ divides $u(G_\ell^\circ)$.

Let $H_\ell$ be the identity component of $\ker \det \rho_\ell \subset G_\ell$ (in other words, the $\ell$-adic analogue of the Hodge group).
By \cite{tankeev:hodge-semisimple}~Lemma~1.4, the condition that $E$ is totally real implies that the Hodge group of $A$ is semisimple and by \cite{silverberg-zarhin}~Theorem~3.2 this implies that $H_\ell$ is semisimple.
Hence $H_\ell$ is the derived group of $G_\ell^\circ$.

Let $\mu$ be a weak Hodge cocharacter of $G_\ell$ in the sense of \cite{pink:mt}~Definition~3.2 and let $T$ be a maximal torus of $G_\ell$ containing the image of $\mu$.
Then $\rho_\ell \circ \mu$ has weights $0$ and $1$ each with multiplicity $\dim A$, so by Lemma~\ref{lem:hodge-group-weights},
\[ \innerprod{\omega}{\mu_H} = \pm \half \]
for all characters $\omega$ in $\rho_{\ell|T}$, where $\mu_H$ is the component of $\mu$ in the quasi-cocharacter space of $T \cap H_\ell$.

Now $\innerprod{-}{\mu}$ takes integer values on the roots of~$G_\ell^\circ$.
Since $\mu_0$ is orthogonal to the roots, the same is true for $\innerprod{-}{\mu_H}$.
Because $H_\ell$ is semisimple, it is equal to the derived group of $G_\ell^\circ$,
so $\mu_H$ is orthogonal to all characters which vanish on $T \cap G_\ell^{\circ \der}$.
Hence $\innerprod{-}{\mu_H}$ takes integer values on $\Lambda_0(G_\ell^\circ)$.

So in order for $\innerprod{\omega}{\mu_H}$ to have denominator $2$, the order of $\omega$ in $\Lambda(G_\ell) / \Lambda_0(G_\ell^\circ)$ must be even.
Therefore $u(G_\ell^\circ)$ is divisible by~$2$.

\paragraph*{Case 2.  $E$ is a CM field}
For each place $\lambda$ of $E$, let $E_\lambda$ denote the completion of $E$ at $\lambda$.
Then $D_\lambda = D \otimes_{E} E_{\lambda}$ is a matrix ring over a division algebra with centre $E_{\lambda}$.
Let $m_\lambda$ be the order of $[D_\lambda]$ in $\Br E_\lambda$.
By the Albert--Brauer--Hasse--Noether theorem (\cite{pierce:algebras} Theorem~18.5), the map $[D] \mapsto ([D_\lambda])$ is an injection
\[ \Br E \to \bigoplus_\lambda \Br E_\lambda \]
so $m$ is the lowest common multiple of the $m_\lambda$.
So it suffices to show that $m_\lambda$ divides $u(G_\ell^\circ)$ for every place $\lambda$.

Since $E$ is a CM field, all its archimedean places have trivial Brauer group, so we need only consider non-archimedean places.
Let $\lambda$ be a non-archimedean place of $E$ and $\ell'$ its residue characteristic.
Then
\[ \End \rho_{\ell'} = D \otimes_{\Q} \Q_{\ell'}
   = D \otimes_{E} \left( \prod_{\lambda'|\ell'} E_{\lambda'} \right)
   = \prod_{\lambda'|\ell'} D_{\lambda'}. \]
Hence $\rho_{\ell'}$ has a $\Q_{\ell'}$-irreducible subrepresentation with endomorphism algebra~$D_{\lambda}$.

So by Lemma~\ref{lem:tits-brauer-order}, $m_\lambda$ divides $u(G_{\ell'}^\circ)$,
and this is equal to $u(G_\ell^\circ)$ by Lemma~\ref{lem:u-indep-of-ell}.
\end{proof}

Theorem~\ref{thm:main-bound-noncommutative} follows from Corollary~\ref{cor:product-bound}, Proposition~\ref{prop:l-adic-multiplicity} and the bounds for $u(G)$ in section~\ref{subsec:uG}.

\subsection{Bounds for $u(G)$} \label{subsec:uG}

\begin{definition}
Let $g(n)$ be the maximum value of $\lcm ( a_i )$ where $a_i$ are positive integers satisfying $\sum a_i = n$.  (This is Landau's function.)

Let $g_1(n)$ be the maximum value of $\lcm ( a_i )$ where $a_i$ are integers greater than $1$ satisfying $\sum (a_i - 1) = n$.

For $n \geq 2$, let
\[ \alpha(n) = \frac{\log_2 g_1(n)}{\sqrt{n \log n}}. \]
\end{definition}

\begin{lemma} \label{lem:u-to-landau}
For any reductive group $G$, $u(G) \leq g_1(\rk G)$.
\end{lemma}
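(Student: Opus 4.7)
The plan is to reduce to the semisimple case, decompose the derived group into simple factors, and invoke the classification of root systems to bound $u(G)$.

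First I would reduce to the case where $G$ is semisimple. Let $T' := T \cap G^\der$; this is a maximal torus of the semisimple group $G^\der$. The restriction map $\Lambda \to \Lambda(T')$ is surjective with kernel equal to the subgroup of characters vanishing on $T'$, and it sends the roots of $G$ bijectively onto the roots of $G^\der$. Hence it induces an isomorphism $\Lambda/\Lambda_0 \cong \Lambda(T')/\Lambda_0(T')$, so $u(G) = u(G^\der)$. Since $\rk G^\der \leq \rk G$ and $g_1$ is non-decreasing (appending a summand $a = 2$ to an admissible partition does not decrease its LCM), it suffices to treat semisimple $G$.

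Next, write $G$ as an almost direct product of simple factors $H_1, \ldots, H_k$ of ranks $r_1, \ldots, r_k$ with $\sum r_i = \rk G$. Passing to the simply connected cover $\tilde G = \prod \tilde H_i$, the character lattice $\Lambda$ sits inside $\prod P_i$ (weight lattices), and $\Lambda_0$ equals $\prod Q_i$ (root lattices). Therefore $\Lambda/\Lambda_0$ injects into $\prod_i P_i/Q_i$, whose exponent is $\lcm_i e_i$, where $e_i$ denotes the exponent of $P_i/Q_i$, equivalently that of the centre of $\tilde H_i$. Hence $u(G)$ divides $\lcm_i e_i$.

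I would then verify the inequality $e_i \leq r_i + 1$ by a quick scan of the classification of irreducible root systems: type $A_{r_i}$ gives $e_i = r_i + 1$; $D_{r_i}$ with $r_i$ odd (so $r_i \geq 3$) gives $e_i = 4$; $E_6$ gives $e_i = 3$; types $B$, $C$, $D$-even and $E_7$ give $e_i = 2$; and $E_8$, $F_4$, $G_2$ give $e_i = 1$. Discarding the trivial factors, the surviving integers $a_i := e_i \geq 2$ satisfy $\sum (a_i - 1) \leq \sum r_i = \rk G$. Padding with copies of $a = 2$ until the sum equals $\rk G$ produces a tuple admissible for $g_1(\rk G)$ whose LCM is at least $\lcm_i e_i$, so $u(G) \leq \lcm_i e_i \leq g_1(\rk G)$. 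The only non-structural ingredient is the bound $e_i \leq r_i + 1$, which is a table lookup; the rest is a routine translation between the character-group quotient $u(G)$ and the centre of the simply connected cover of $G^\der$.
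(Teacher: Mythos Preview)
Your proof is correct and follows essentially the same route as the paper: both arguments observe that $\Lambda/\Lambda_0$ embeds in the product of the fundamental groups $P_i/Q_i$ of the simple components of the root system, invoke the classification bound $e_i \leq r_i + 1$ for the exponents of these groups, and then apply the definition of $g_1$ together with its monotonicity. The only cosmetic difference is that you make the reduction to the semisimple case explicit via $u(G) = u(G^{\der})$, whereas the paper passes directly to the root system of $G$.
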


\begin{proof}
Let $\Phi_i$ (for $i \in I$) be the simple components of the root system of~$G$.

The group $\Lambda/\Lambda_0$ is a subgroup of the product of the fundamental groups of the $\Phi_i$.
So $u(G)$ divides the lowest common multiple of the exponents of these fundamental groups.

Let $e_i$ be the exponent of the fundamental group of $\Phi_i$.
Then $e_i \leq \rk \Phi_i + 1$ for all $i$ (by the classification of simple root systems),
and so $\sum_i (e_i - 1) \leq \rk G$.

By the definition of $g_1$,
\[ u(G) \leq g_1 \left(\sum_{i \in I} (e_i - 1) \right) \]
and this is less than or equal to $g_1(\rk G)$ because $g_1$ is nondecreasing.
\end{proof}

\begin{corollary} \label{cor:alpha-limit}
$\alpha(n) \to \frac{1}{\log 2}$ as $n \to \infty$ and $\alpha(n) < 2$ for all $n \geq 2$.
\end{corollary}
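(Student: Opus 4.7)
The claim $\alpha(n) \to 1/\log 2$ is equivalent to the Landau-type asymptotic $\log g_1(n) \sim \sqrt{n \log n}$, and the plan is to derive this by bracketing $g_1(n)$ between two values of Landau's function $g$. For the lower bound I would start from an optimal decomposition $m = \sum a_i$ (with all $a_i \geq 2$) realising $g(m)$, for $m$ slightly less than $n$. Since $\sum(a_i - 1) = m - r$ where $r$ is the number of parts, padding with additional parts equal to $2$ yields $\sum(a_i - 1) = n$ without changing the LCM, so $g_1(n) \geq g(m)$. Choosing $m = n - o(n)$ and invoking the classical estimate $\log g(m) \sim \sqrt{m \log m}$ gives $\log g_1(n) \geq \sqrt{n \log n}(1 - o(1))$.

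For the upper bound, the key step is to refine any valid decomposition for $g_1(n)$ so that the $a_i$ are powers of distinct primes. If $a = bc$ with coprime $b, c \geq 2$, splitting $a$ into the pair $(b, c)$ preserves the LCM, and the inequality $bc - 1 \geq b + c - 2$ shows that $\sum(a_i - 1)$ does not increase. Once the parts are distinct prime powers, the constraint $\sum(a_i - 1) = n$ forces the number of parts to be $O(\sqrt{n/\log n})$ by the prime number theorem (comparing with the sum of the first $r$ primes). Hence $g_1(n) \leq g(n + O(\sqrt{n/\log n}))$, and Landau's theorem again yields $\log g_1(n) \leq \sqrt{n \log n}(1 + o(1))$.

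Dividing by $\sqrt{n \log n} \cdot \log 2$ gives $\alpha(n) \to 1/\log 2$. This limit already provides $\alpha(n) < 2$ for all $n \geq N_0$ with some effective $N_0$; for $2 \leq n < N_0$ I would verify $\alpha(n) < 2$ by direct enumeration of the partitions defining $g_1(n)$ (a small calculation suggests $\alpha(n)$ stays well below $1.5$ throughout this range).

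The main obstacle will be matching the leading constant in the upper bound: the naive inequality $g_1(n) \leq g(2n)$ yields only $\alpha(n) \lesssim \sqrt{2}/\log 2 \approx 2.04$, which fails to establish $\alpha(n) < 2$ even asymptotically. Controlling the number of parts via the distinct-prime-power refinement, so that one compares $g_1(n)$ to $g(n + o(n))$ rather than to $g(2n)$, is therefore the crucial technical step.
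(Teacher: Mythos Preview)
Your sandwiching strategy for $g_1$ between two values of Landau's function $g$ is exactly the paper's approach, and your asymptotic argument for $\alpha(n)\to 1/\log 2$ is correct. Two remarks.

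First, the prime-power refinement is more work than needed. Once you drop repeated parts (which never hurts the LCM), the $a_i$ are \emph{distinct} integers $\geq 2$, so if there are $r$ of them then $\sum_i(a_i-1)\geq 1+2+\cdots+r=r(r+1)/2$, giving $r\leq\lfloor\sqrt{2n}\rfloor$. Hence already $g_1(n)\leq g\bigl(n+\lfloor\sqrt{2n}\rfloor\bigr)$, which is enough for the limit. Your sharper $O(\sqrt{n/\log n})$ bound on the number of parts is true but unnecessary here.

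Second, and this is the genuine gap: the sentence ``this limit already provides $\alpha(n)<2$ for all $n\geq N_0$ with some effective $N_0$'' is not justified. Landau's theorem $\log g(n)\sim\sqrt{n\log n}$ is an asymptotic statement; by itself it gives no computable $N_0$, so you cannot reduce the inequality to a finite check. What is needed is an \emph{explicit} upper bound for $g$. The paper invokes Massias' inequality $\log_e g(n)<1.05314\sqrt{n\log n}$ for all $n\geq 2$; combining this with the explicit sandwich $g_1(n)\leq g\bigl(n+\lfloor\sqrt{2n}\rfloor\bigr)$ and the elementary monotonicity of $f(x)=\frac{(x+\sqrt{2x})\log(x+\sqrt{2x})}{x\log x}$ yields $\alpha(n)<2$ for $n\geq 9$, and the cases $2\leq n\leq 8$ are then a genuine finite computation. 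Your proposal should cite (or reprove) such an effective bound; otherwise the second half of the corollary is not established.
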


\begin{proof}
We use two results on the size of $g(n)$: Landau's asymptotic result~\cite{landau:asymptotic}
\[ \frac{\log_e g(n)}{\sqrt{n \log n}} \to 1 \text{ as } n \to \infty \]
and Massias' bound~\cite{massias:landau}
$$ \log_e g(n) < 1.05314 \sqrt{n \log n} \text{ for all } n \geq 2. $$

We note that $g(n) \leq g_1(n) \leq g\nfsqr$ since any set of distinct positive integers satisfying
$\sum_i (a_i - 1) = n$ will satisfy $\sum_i a_i \leq n + \lfloor \sqrt{2n} \rfloor$.

Let
\[ f(x) = \frac{(x+\sqrt{2x}) \log(x+\sqrt{2x})}{x \log x}. \]

Since $f(x) \to 1$ as $x \to \infty$, we conclude that $\alpha(n) \to \frac{1}{\log 2}$.

Likewise by Massias' bound
\[ \alpha(n)
   \leq \frac{\log_e g \nfsqr}{\log 2 \, \sqrt{n \log n}}
   < \frac{1.05314 \sqrt{f(n)}}{\log 2}
   \leq \frac{1.05314 \sqrt{f(9)}}{\log 2}
   < 2 \]
for $n \geq 9$ since $f(x)$ is decreasing for $x > 1$.

Manual calculation shows that $\alpha(n) < 2$ for $2 \leq n \leq 8$.
\end{proof}

\section{Some examples} \label{sec:examples}

In this section, we will give three examples of families of abelian varieties with commutative endomorphism ring for which Theorem~\ref{thm:main-bound-commutative} is sharp.
Note that any abelian variety for which the rank of the Mumford--Tate group is equal to the bound of Theorem~\ref{thm:main-bound-commutative} necessarily satisfies the Mumford--Tate conjecture, because the rank of the $\ell$-adic monodromy groups are less than or equal to that of the Mumford--Tate group but satisfy the same lower bound.
Hence the examples we give show that the bound is sharp for the $\ell$-adic monodromy group as well as for the Mumford--Tate group.

We also give one family of simple abelian varieties with noncommutative endomorphism ring for which the Mumford--Tate group has rank $n$ and the dimension $g$ satisfies $\log_2 g = n + \half \log_2 n + \mathrm{O}(1)$.  This shows that the bound in Theorem~\ref{thm:main-bound-noncommutative} cannot be improved to $n \geq \log_2 g + \mathrm{O}(1)$.
Because we have not calculated the exact lower bound in the noncommutative case we cannot deduce that these varieties satisfy the Mumford--Tate conjecture purely from the rank bound.
But for the examples constructed here, we can show that they satisfy the Mumford--Tate conjecture by using \cite{pink:mt}~Proposition~4.3.

\subsection{Examples with commutative endomorphism ring}

\paragraph*{Example 1: Complex multiplication.}

Let $F$ be a totally real field such that $[F:\Q] = n-1$.
By \cite{shimura:canonical-models}~Theorem~1.10,
there is an imaginary quadratic extension $K$ of $F$ such that for every CM type $(K, \Phi)$,
the reflex type $(K', \Phi')$ satisfies $[K':\Q] = 2^{n-1}$.
Such a CM type is primitive.

Let $A$ be a complex abelian variety corresponding to the CM type $(K', \Phi')$.
Then the Mumford--Tate group $M$ is a torus,
isomorphic to the image of the homomorphism $\Res_{K/\Q} \G_m \to \Res_{K'/\Q} \G_m$
induced by the reflex norm $K^\times \to K'^\times$.

This image has rank at most $[K:\Q] + 1 = n + 1$.
But $\dim A = 2^{n-1}$ so by Theorem~\ref{thm:main-bound-commutative}, $\rk M \geq n + 1$.
So in fact $\rk M = n + 1 = \log_2 \dim A + 2$.

The endomorphism ring of $A$ is the field $K'$.

\paragraph*{Example 2: Spin group.}

This example generalises the Kuga-Satake construction of an abelian variety attached to a polarised $K3$ surface~\cite{kuga-satake:construction}.

Let $n$ be a positive integer congruent to $1$ or $2 \bmod 4$.
Let $W$ be a $\Q$-vector space of dimension $2n+1$,
and let $Q$ be the quadratic form
\[ Q(x) =  x_1^2 + x_2^2 - x_3^2 - \dotsb - x_{2n+1}^2 \]
of signature $(2, 2n-1)$.
The even Clifford algebra $C^+(W, Q)$ is isomorphic to $\operatorname{M}_{2^n}(\Q)$,
and so it has a unique faithful irreducible $\Q$-representation of dimension $2^n$,
called the spin representation.

Let $M$ be the Clifford group
\[ \operatorname{GSpin}(W, Q) = \{ x \in C^+(W, Q) \mid xWx^{-1} \subseteq W \}. \]
This is a reductive group of rank $n+1$, with root system~$B_n$ and centre $\G_m$.
Let $\rho : M \to \GL(V)$ be the spin representation of $M$.
This is an absolutely irreducible representation of dimension~$2^n$.

Let $\{ e_1, e_2 \}$ be an orthonormal basis for the positive definite subspace of $W$.
The homomorphism $\varphi : \C^\times \to M(\R)$ given by
\[ \varphi(a+ib) = a+b e_1 e_2 \]
defines a Hodge structure on $V$ of type $\{ (0,-1), (-1,0) \}$.
The conditions on $n \bmod 4$ and on the signature of $W$ ensure that this Hodge structure is polarisable.

Because $M^\der$ is almost simple, replacing $\varphi$ by a generic $M(\R)$-conjugate gives a Hodge structure whose Mumford--Tate group is $M$.
Let $A$ be a complex abelian variety corresponding to such a Hodge structure.
It has dimension $2^{n-1}$ and endomorphism algebra $\Q$, and its Mumford--Tate group has rank $n+1$.

\paragraph*{Example 3: Product of copies of $\operatorname{SL}_2$.}

This example generalises the example of Mumford~\cite{mumford:families} of a family of abelian varieties of dimension $4$ with Mumford--Tate group $M$ such that $M_\C$ is isogenous to $\G_m \times (\operatorname{SL}_2)^3$.

Let $n$ be an odd positive integer,
and $F$ a totally real number field of degree $n$.
Let $D$ be a quaternion algebra over $F$ such that:
\begin{enumerate}[(i)]
\item $\operatorname{Cor}_{F/\Q} D$ is split over $\Q$, i.e.\ is isomorphic to $\operatorname{M}_{2^n}(\Q)$.
\item $D$ is split at exactly one real place of $F$.
\end{enumerate}

Let $M$ be the $\Q$-algebraic group $M(A) = \{ x \in (D \otimes A)^\times | x \bar{x} \in A^\times \}$
(where $\bar{x}$ is the standard involution of $D$).
By condition~(ii), $M_\R$ is isomorphic to
\[ \left( \G_{m,\R} \times \operatorname{SL}_{2,\R} \times \operatorname{SU}_2^{n-1} \right)
   / \left\{ (\eps_0, \eps_1, \dotsc, \eps_n) \mid \eps_i \in \{ \pm 1 \}, \eps_0\eps_1\dotsb\eps_n = 1 \right\}. \]

By condition~(i), $M$ has a faithful irreducible $\Q$-representation $\rho$ of dimension $2^n$.
Then $\rho \otimes_\Q \C$ is isomorphic to the tensor product of the standard $1$-dimensional representation of $\G_{m,\C}$ with the standard $2$-dimensional representation of each factor $\operatorname{SL}_{2,\C}$.

Let $\varphi : \C^\times \to M(\R)$ be the homomorphism
\begin{align*}
\varphi(a+ib) = & \fullsmallmatrix{a}{b}{-b}{a} \text{ in } \GL_2 \isom \left( \G_m \times \operatorname{SL}_2 \right) / \{ \pm 1 \}
\\		& \text{ and trivial in the } \operatorname{SU_2} \text{ factors}.
\end{align*}
Then $\rho \circ \phi$ defines a Hodge structure of type $\{ (0,-1), (-1,0) \}$.
By condition~(ii), this Hodge structure is polarisable.

Again $M^\der$ is almost $\Q$-simple,
so replacing $\varphi$ by a generic element of its $M(\R)$-conjugacy class gives a Hodge structure with Mumford--Tate group equal to $M$.
An abelian variety corresponding to such a Hodge structure will have dimension $2^{n-1}$, endomorphism algebra $\Q$ and Mumford--Tate group of rank $n+1$.

\subsection{An example with large multiplicity}

Let $n$ be an odd integer and $r = (n-1)/2$.
We will construct a simple abelian variety of dimension $g(n) = n \binom{n}{r}$ whose Mumford--Tate group is a $\Q$-form of $\GL_n$.
The Mumford--Tate representation is isomorphic over $\C$ to the sum of $2n$ copies of the $r$-th exterior power of the standard representation.
By Stirling's formula $\log_2 g(n) = n + \half \log_2 n + \mathrm{O}(1)$.

Let $K$ be an imaginary quadratic field, and $D$ a central division algebra over $K$ of dimension $n^2$ with an 
involution $*$ of the second kind.
The $\Q$-algebraic groups
\begin{align*}
H(A) &= \{ d \in (D \otimes_\Q A)^\times \mid dd^* = 1 \},	\\
G(A) &= \{ d \in (D \otimes_\Q A)^\times \mid dd^* \in A^\times \}
\end{align*}
are $\Q$-forms of $\operatorname{SL}_n$ and $\operatorname{GL}_n$.
By choosing $*$ appropriately, we may suppose that $H_\R$ is the unitary group of a Hermitian form of signature $(1, n-1)$.

We can view $D$ as a $K$-irreducible representation of $H_K$.
Over $\C$, $D_\C$ is isomorphic to the sum of $n$ copies of the standard representation of~$\operatorname{SL}_n$, so its highest weight is $\varpi_1$.
The endomorphism ring of this representation is $D^\op$, so
\[ \alpha_{H,K}(\varpi_1) = [D] \]
for Tits' homomorphism $\alpha_{H, K} : \Lambda^\Gamma \to \Br K$.

Let $r = (n-1)/2$ and let $\tilde{D}$ be the central division algebra over $K$ such that $[\tilde{D}] = [D]^r$ in $\Br K$.
Now $[D]$ has order $n$ in $\Br K$.
Since $r$ and $n$ are coprime, $[\tilde{D}]$ also has order $n$ and $\tilde{D} \otimes_K \C \cong M_n(\C)$.

Let $\tilde{\rho}$ be the $K$-irreducible representation of $H_K$ with highest weight $\varpi_r$.
We know that $\varpi_r \equiv r\varpi_1$ modulo the roots of $H_K$, so $\alpha_{H,K}(\varpi_r) = [D]^r = [\tilde{D}]$.
Hence $\tilde{\rho}$ has endomorphism ring $\tilde{D}^\op$, so $\tilde{\rho}_\C$ is the sum of $n$ copies of an irreducible representation of $\operatorname{SL}_n$.
This irreducible representation is the $r$-th exterior power of the standard representation,
so $\dim_K \tilde{\rho} = n\binom{n}{r}$.

If $\lambda I$ is a scalar matrix in $H(\C)$, then $\tilde{\rho}_\C(\lambda I)$ is multiplication by $\lambda^r$.
So we can extend $\tilde{\rho}$ to a representation of $G_K$ by letting each scalar matrix $\lambda I$ act as multiplication by $\lambda^r$.

Let $\rho = \Res_{K/\Q} \tilde{\rho}$.
This is a $\Q$-irreducible representation of $G$ of dimension $2n\binom{n}{r}$.
We have $\ker \rho = \mu_r$ so $\rho$ factorises through $M = G/\mu_r$,
and the resulting representation of $M$ is faithful.

In order to specify the Hodge structure, we will first define $\varphi' : \mathbb{C}^\times \to G(\mathbb{R})$ as follows:
recall that $H_\R$ is the unitary group of a Hermitian form $\Psi$ of signature $(1, n-1)$.
Then let $\phi'(z)$ act as $z^r/\bar{z}^{r-1}$ on the $1$-dimensional space where $h$ is positive definite and as $\bar{z}$ on the $(n-1)$-dimensional space where $h$ is negative definite.

Then $\rho \circ \varphi'$ has weights $z^r$ and $\bar{z}^r$.
Because $\rho$ is faithful as a representation of~$M$, it follows that there is a homomorphism
$\varphi : \C^\times \to M(\R)$ whose $r$-th power is~$\varphi'$.
Then $(M, \rho, \varphi)$ defines a $\Q$-Hodge structure of type $\{ (-1,0), (0,-1) \}$.
The Hermitian form $\Psi$ induces a polarisation of this Hodge structure.

Once again, $M^\der$ is almost simple, so replacing $\varphi$ by a generic $M(\R)$-conjugate gives a Hodge structure with Mumford--Tate group $M$.
A corresponding abelian variety will have endomorphism algebra $\tilde{D}^\op$ and dimension
$g = n \binom{n}{r}$.

We shall confirm that this variety satisfies the Mumford--Tate conjecture.
Let $\sigma$ be an absolutely irreducible component of $\rho \otimes_\Q \bar{\Q_\ell}$.
Then $(G \times_\Q \bar{\Q_\ell}, \sigma)$, with a suitable set of cocharacters, form a weak Mumford--Tate triple of weights $\{ 0, 1 \}$.
By Faltings' theorem, the restriction of $\sigma$ to $G_{\ell, \bar{\Q_\ell}}$ must remain irreducible, where $G_\ell$ is the $\ell$-adic monodromy group.
It also is part of a weak Mumford--Tate triple of weights $\{ 0, 1 \}$.
But our $(G \times_\Q \bar{\Q_\ell}, \sigma)$ is in the fourth column of \cite{pink:mt} Table~4.2: type A with $\sigma$ not the standard representation.
Hence according to Pink's Proposition~4.3, $G_\ell = G \times_\Q \Q_\ell$.

\backmatter
\bibliographystyle{smfalpha}
\bibliography{mt}

\end{document}